\newtheorem{thm}{Theorem}
\newtheorem*{exmp*}{Example}
\newtheorem{prop}[thm]{Proposition}
\newtheorem{lem}[thm]{Lemma}
\newtheorem{conj}[thm]{Conjecture}
\newtheorem{defn}[thm]{Definition}
\theoremstyle{remark}
\newtheorem{rem}{Remark}
\newcommand{\defeq}{\vcentcolon=}
\DeclareMathOperator{\rk}{rank}
\DeclareMathOperator{\Lie}{Lie}
\DeclareMathOperator{\Log}{Log}
\DeclareMathOperator{\Jac}{Jac_C}
\newcommand\abint[2]{\prescript{\text{AB}}{}{\int_{#1}^{#2}}}
\newcommand\bcint[2]{\prescript{\text{BC}}{}{\int_{#1}^{#2}}}
\DeclareMathOperator{\red}{red}
\newcommand{\ts}{\textsuperscript}
\title{Rank-Favorable Bounds for Rational Points on Superelliptic Curves of Small Rank}
\author{Noam Kantor}
\begin{document}

\maketitle

\begin{abstract}
Let $C$ be a curve of genus at least three defined over a number field, and let $r$ be the rank of the rational points of its Jacobian. Under mild hypotheses on $r$, recent results by Katz, Rabinoff, Zureick-Brown, and Stoll bound the number of rational points on $C$ by a constant that depends only on its genus. Yet one expects an even stronger bound that depends favorably on $r$: when $r$ is small, there should be fewer points on $C$. In a 2013 paper, Stoll established such a ``rank-favorable" bound for hyperelliptic curves using Chabauty's method. In the present work we extend Stoll's results to superelliptic curves, noting in the process some differences that ought to inform uniformity conjectures for general curves. Our results have stark implications for bounding numbers of rational points, since $r$ is expected to be small for ``most" curves.
\end{abstract}





\section{Background and Main Theorems}

Diophantine finiteness, in its most general sense, is the philosophy that the set of rational points on a variety of general type should be ``small," for example not Zariski dense. There is a web of interconnected conjectures that make this philosophy precise, such as the Bombieri--Lang conjecture (see \cite[p.~479]{hindrysilverman}). In the dimension one case, Bombieri--Lang is just Faltings' Theorem, which says that the number of rational points on a curve of genus greater than one is finite.

But it is computationally and theoretically important to have control of rational points beyond the basic finiteness supplied by Faltings. In fact, one conjectures the following.

\begin{conj}[\cite{uniformity}, p.~2]
Let $C$ be a curve of genus $g>1$ defined over a number field $K$. Then there exists a constant $B(g,K)$ such that $\# C (K) < B(g,K)$.
\end{conj}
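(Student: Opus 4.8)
The plan is to split the problem according to the size of the Mordell--Weil rank $r$ of $J = \mathrm{Jac}(C)$ relative to $g$, since the two regimes call for completely different tools. In the range $r \le g-1$ I would run Chabauty's method: fix a prime $\mathfrak{p}$ of $K$, embed $C(K) \hookrightarrow C(K_{\mathfrak p}) \subset J(K_{\mathfrak p})$, and observe that the $\mathfrak{p}$-adic closure of $J(K)$ is a $\mathfrak{p}$-adic Lie group of dimension $\le r < g$. The global differentials that annihilate it (at least $g-r$ of them) integrate to $\mathfrak{p}$-adic analytic functions vanishing on $C(K)$; bounding the zeros of such a function on each residue disk by its Newton polygon --- Coleman's argument, sharpened by Stoll --- yields a bound in terms of $g$, the reduction type at $\mathfrak{p}$, and the residue characteristic. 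One then needs two upgrades: choosing $\mathfrak{p}$ so that its residue characteristic is controlled by $g$ and $K$ (elementary); and handling bad reduction, where instead of naively counting residue disks one compares Coleman integrals across the reduction graph of a semistable model as in Katz--Rabinoff--Zureick-Brown, which under the stronger hypothesis $r \le g-3$ removes all dependence on $C$ from the bound. This is the route that the present paper refines for superelliptic $C$.

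When $r \ge g$ Chabauty fails, and I would instead follow Caporaso--Harris--Mazur and transfer the question to a moduli space. Working over (a level cover of) $M_g$, form the $n$-fold fibered power $\mathcal{C}^n$ of the universal curve and delete the diagonals; the crucial geometric fact to establish is that for $n \gg 0$ a smooth compactification of this variety is of general type. Assuming the Bombieri--Lang conjecture, its $K$-points are then not Zariski dense, and a Noetherian induction on the base converts this non-density into a uniform bound $\#C(K) \le N(g,K)$ valid outside a proper closed subvariety of moduli, with the remaining subfamilies dispatched by descending induction on dimension. Taking the maximum of the bounds from the two regimes gives $B(g,K)$.

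The honest main obstacle is that the large-rank argument is only conditional: it depends on the Bombieri--Lang conjecture, which is open, and no unconditional substitute is known. One natural hope is to replace ordinary Chabauty by Kim's non-abelian version, using a Selmer variety of sufficiently high unipotent depth whose dimension can be forced below $g$ irrespective of $r$; but making that dimension bound unconditional would require Bloch--Kato-type finiteness results in Galois cohomology that are themselves unavailable. Consequently the realistic, unconditional target is exactly the rank-favorable statement --- a bound on $\#C(K)$ that improves as $r$ decreases, valid at least for $r \le g-3$ --- and it is this refinement, rather than the full conjecture, that the paper goes on to establish for superelliptic curves.
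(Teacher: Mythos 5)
This statement is an open conjecture --- the uniformity conjecture of Caporaso--Harris--Mazur --- which the paper merely cites and does not (and could not) prove; the paper's actual contribution is a rank-favorable bound for superelliptic curves under the hypothesis $r \leq \lfloor \deg(f)/m \rfloor - 4$. Your proposal is an accurate survey of the known partial results, but it is not a proof, and you candidly identify the fatal gap yourself: the regime $r \geq g$ rests entirely on the Bombieri--Lang conjecture via Caporaso--Harris--Mazur, so the argument is conditional on an open problem at least as hard as the statement being proved.

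There is a second, less advertised gap in your small-rank regime. Classical Chabauty--Coleman under $r \leq g-1$ gives a bound of the shape $\#C(K) \leq \#C(\mathbb{F}_{\mathfrak{p}}) + (\text{terms in } g)$ at a prime of good reduction, and the size of the special fiber (equivalently, the norm of the first usable prime $\mathfrak{p}$) cannot be bounded in terms of $g$ and $K$ alone; this is exactly the non-uniformity the paper discusses in its introduction. The uniform refinements that work at a fixed small prime of bad reduction (Stoll's annulus integration, Katz--Rabinoff--Zureick-Brown) require $r \leq g-3$ or $r \leq g-2$, because two codimension conditions must be imposed to reconcile the abelian and Berkovich--Coleman integrals (Theorem~\ref{stoll} in the paper). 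So even granting Bombieri--Lang for $r \geq g$, the window $g-2 \leq r \leq g-1$ is not covered by any known uniform argument, and your two regimes do not meet. The honest conclusion is the one you reach at the end: the unconditional, provable target is the rank-favorable bound in the Chabauty range, which is what the paper establishes for superelliptic curves.
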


Significant progress has been made on this conjecture when the rank of the Jacobian of $C$ is somewhat smaller than $g$ using a technique called Chabauty's method. The classical application of the method constructs a $p$-adic differential $\omega$ and an associated $p$-adic analytic function $f_\omega$ on $\Jac(\mathbb{C}_p)$ such that the zero set of $f_\omega$ contains $C(\mathbb{Q})$. Chabauty's original paper reasons using basic facts about analytic functions to show that $f_\omega$ only has finitely many zeros. In fact, $f_\omega$ is really a $p$-adic integral of $\omega$, and a robust theory of $p$-adic integration is the key to modern applications of Chabauty's ideas.

Indeed, a number of improvements have been made to Chabauty's method which make it both stronger and more widely applicable. First, Coleman \cite{coleman1985} realized that one could apply Riemann--Roch and Newton polygons to explicitly estimate the number of zeros of $\omega$, and thus the number of zeros of $f_\omega$. Another limitation of the original method was that it required analysis in $\mathbb{C}_p$, for $p$ a prime of good reduction for $C$. Since the first prime of good reduction might be arbitrarily large, even Coleman's Riemann--Roch improvement gave non-uniform bounds on the number of rational points on $C$. Michael Stoll, in the paper \cite{stoll2013uniform}, described a theory of $p$-adic integration on annuli that allowed for uniform bounds using primes of bad reduction. (Lorenzini--Tucker \cite{lt} had actually already used primes of bad reduction, but they required a regular model of the curve in question, which led to non-uniform bounds.)  We mention also the recent results of Ellenberg and Hast \cite{EH}, which reproves Faltings' theorem for superelliptic curves using non-abelian Chabauty, and those of Katz--Rabinoff--Zureick-Brown \cite{krzbsurvey}, which use Stoll's ideas and tropical geometry to prove uniform bounds for general curves satisfying a rank hypothesis.

Prior to Stoll's work, it was known that the $p$-adic integral that Chabauty and Coleman used in the case of good reduction is actually two different integrals that happen to agree for a prime of good reduction. These two integrals, the Berkovich--Coleman and Abelian integrals, no longer agree in the bad reduction case; for example, the ``tube" $$] y [ \, = \{x \in C(\mathbb{C}_p) \mid \red(x) = y\}$$ is an annulus when $y$ is a nodal point the special fiber. (Here $\red$ refers to the reduction map from $C(\mathbb{C}_p)$ to $C(\overline{\mathbb{F}}_p)$.)

Stoll realized that one can use differentials for which the two integrals agree, and forcing the two integrals to be equal in this way consists of one linear condition on the space of differentials. (He imposes one more linear condition to kill the $p$-adic logarithm used in the Berkovich--Coleman integral.) When $C$ is a hyperelliptic curve, Stoll explicitly describes the differentials on each tube. He then applies Newton polygon arguments and a little linear algebra on each tube, and then incorporates a sophisticated analysis of the combinatorics of the special fiber to prove the following.

\begin{thm}[\cite{stoll2013uniform}, Theorem 1.4]
Suppose $C \colon y^2 = f(x)$ is a hyperelliptic curve defined over $\mathbb{Q}$. Assume further that $g \geq 3$ and $r \defeq \rk\Jac(\mathbb{Q}) \leq g-3$. Then
\begin{align*}
\# C(\mathbb{Q}) &\leq 33(g-1) + 1 &\text{ if } r=0 \text{, and } \\ \# C(\mathbb{Q}) &\leq 8rg + 33(g-1) - 1 &\text{ if } r \geq 1.
\end{align*}
\end{thm}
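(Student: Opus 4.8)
The plan is to run Chabauty's method, with Coleman's Riemann--Roch refinement and Stoll's theory of $p$-adic integration on annuli, on a model of $C$ of possibly bad reduction, while tracking the dependence on $r$ throughout. Fix a prime $p$ --- no good-reduction hypothesis is needed, which is exactly the point --- and, after a finite extension of $\mathbb{Q}_p$ (which affects neither $C(\mathbb{Q})$ nor the Chabauty hypothesis $r<g$), assume $C$ has semistable reduction; let $\mathcal{C}$ be a semistable model and $\Gamma$ the dual graph of its special fiber. For $y^2=f(x)$ all of this combinatorial data --- the components of the special fiber and their genera, the nodes, equivalently the decomposition of $C(\mathbb{C}_p)$ into residue disks and annuli together with explicit coordinates on each annulus --- is read off from the cluster picture of the roots of $f$, and the number of pieces is $O(g)$.

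\textbf{Chabauty differentials.} Since $r=\rk\Jac(\mathbb{Q})<g$, the image of $\Jac(\mathbb{Q})$ under the Abelian logarithm spans at most an $r$-dimensional subspace of $H^0(C_{\mathbb{C}_p},\Omega^1)^\vee$, so its annihilator $W\subseteq H^0(C_{\mathbb{C}_p},\Omega^1)$ has $\dim W\ge g-r$, and every $\omega\in W$ satisfies $\abint{P}{Q}\omega=0$ for all $P,Q\in C(\mathbb{Q})$. To convert this into zero-counting one replaces Abelian integrals by Berkovich--Coleman integrals, which on each piece are given by explicit $p$-adic power series: on a residue disk one gets an honest power series $f_\omega$, while on an annulus one picks up a $p$-adic logarithm term proportional to the residue of $\omega$ around that annulus, and Stoll's two extra linear conditions --- forcing the Berkovich--Coleman and Abelian integrals to agree and fixing the branch of $\log_p$ --- are designed precisely to tame this term. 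The outcome is that on each piece the relevant integral vanishes at every rational point of that piece and is analytic enough to admit a Newton-polygon count of its zeros.

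\textbf{The rank-favorable zero count.} On a residue disk $U$ containing a rational point $P_0$, the decisive move is to exploit the size of $W$: ``vanish to order $k$ at $P_0$'' is $k$ linear conditions, so there is a nonzero $\omega_U\in W$ vanishing at $P_0$ to order about $g-r$; then $f_{\omega_U}$ vanishes at $P_0$ to order about $g-r$, so that many rational points on $U$ are accounted for ``for free,'' and the Newton-polygon estimate bounds the remaining zeros in terms of the residual vanishing of $\omega_U$ plus a contribution governed by the width of $U$. When $r$ is small this is a large saving --- for $r=0$ essentially every point on a disk is absorbed for free --- which is the source of the favorable dependence on $r$; the surviving $33(g-1)$ is Newton-polygon overhead per piece times the $O(g)$ pieces, together with the combinatorial width contributions, whereas for $r\ge1$ the disks on which rational points accumulate beyond the free allowance cost the extra $8rg$. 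Points on annuli are handled by a more combinatorial argument, bounding the rational points on all annuli together in terms of $\Gamma$ (equivalently the cluster picture) and of $r$; summing the disk and annulus contributions and collapsing the width terms against the combinatorics of $\Gamma$ produces a constant independent of $p$, and the two displayed inequalities follow.

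\textbf{The main obstacle.} The delicate part is exactly this last bookkeeping. One cannot add zero counts naively across pieces, since each piece demands its own optimally adapted differential; the argument must therefore be organized so that the total order of vanishing imposed over all pieces is charged once against the single finite-dimensional space $W$ --- producing the factor of $g$ --- and once against its codimension $r$ --- producing the factor of $r$ --- and so that the width and ramification contributions on annuli, which a priori depend on the residue characteristic, cancel against the combinatorics of the special fiber, leaving no $p$-dependence in the final constant. For hyperelliptic curves this is where Stoll's detailed study of the cluster picture --- how clusters nest, how many annuli appear, and how rational points can be distributed among them --- does the real work.
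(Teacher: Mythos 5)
The paper does not actually prove this theorem; it is quoted from Stoll as motivation, and Sections 4--6 of the paper carry out the analogous argument for superelliptic curves. Your outline gets the global architecture right (decomposition into residue disks and annuli, $\dim V_{\text{chab}}\ge g-r$, the two extra linear conditions cutting out $W_{\text{chab}}$ with $\dim W_{\text{chab}}\ge g-r-2$, Newton polygons), but the mechanism you give for the rank-favorable count on residue disks is backwards. Choosing $\omega_U\in W$ to vanish to order about $g-r$ at a known rational point $P_0$ does not ``absorb'' other rational points: a high-multiplicity zero at $P_0$ is still only one point, and the $p$-adic Rolle/Newton-polygon estimate bounds the number of zeros of $f_{\omega_U}$ on the disk from above by (roughly) one plus $\mu$ times the order of vanishing of $\omega_U$ there --- so forcing extra vanishing makes the per-disk bound \emph{worse}, not better. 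The actual argument (Stoll's Lemma 7.1 via KZB Theorem 4.4, or Amini--Baker, exactly as quoted in this paper) chooses on each disk a differential in $V$ of \emph{minimal} vanishing order $n_v$, bounds the zeros of $f_\omega$ there by $1+\mu n_v$, and then establishes a global inequality of the shape $\sum_v n_v\le 2g-2+2r$ by a Clifford-type/rank-function argument on the special fiber. That global inequality is the sole source of the $r$-dependence on the disk side; it is not linear algebra on a single disk, and your ``free allowance'' heuristic would not produce it.

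The second gap is that the annulus analysis --- the genuinely new content of Stoll's theorem and precisely the part this paper generalizes --- is left as a black box (``a more combinatorial argument''). What is needed there: explicit parametrizations of the odd and even annuli of a hyperelliptic curve in terms of the partition of the roots of $f$ induced by the annulus; the observation that the pullbacks of the basis $x^i\,dx/y$ to a given annulus share a common zero-free factor $u(z)$, so that a nonzero element of $W_{\text{chab}}\cap\langle\omega^{(0)},\dots,\omega^{(r+2)}\rangle$ (nonzero because $\dim W_{\text{chab}}\ge g-r-2$ while the span has dimension $r+3$) restricts to $v(z)u(z)\,dz/z$ with $v$ a Laurent polynomial of width $O(r)$; the Newton-polygon bound of about $\mu\cdot(\text{width})$ zeros of its integral per annulus; and the count of $O(g)$ annuli coming from the tree of clusters. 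Without these steps one cannot extract the specific constants $33(g-1)+1$ and $8rg+33(g-1)-1$, nor even the linearity in $r$ of the annulus contribution.
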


Note here that the given bound is linear in both $g$ and $r$. Katz, Rabinoff, and Zureick-Brown \cite{katz2015uniform} have shown that the number of rational points on \textit{any} curve defined over $\mathbb{Q}$ with $r < g-2$ is bounded quadratically by $g$. Based on Stoll's results, however, one actually expects a bound that is linear in both $r$ and $g$ and reduces to their bound when $r$ is close to $g$.

In this paper we extend Stoll's work to some superelliptic curves, that is, curves of the form $y^m = f(x)$ for $m \geq 2$ and $f$ a rational polynomial of degree at least $4$, with roots of multiplicity strictly less than $m$. We obtain a linear bound in $g$ and $r$ for each fixed $m$, thus confirming the idea that small values of $r$ should lead to better Chabauty bounds.

The following theorem is our main result.
\begin{thm} \label{main}

Let $C$ be the superelliptic curve defined above. Suppose $m>2$ and $r \leq \lfloor \frac{\deg(f)}{m} \rfloor - 4$. Let $p$ be the smallest prime congruent to 1 modulo $m$. Then 
$$\# C(\mathbb{Q}) < (8g-8)(r+3) + 2m(r+3) + (2p + 2)(g-1) + 4r.$$

\end{thm}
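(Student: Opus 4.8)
The plan is to run the Chabauty--Coleman method in the refined form of Stoll \cite{stoll2013uniform} (as reworked by Katz--Rabinoff--Zureick-Brown), at the prime $p$. The congruence $p\equiv 1\pmod m$ is imposed so that $\mu_m\subset\mathbb{Q}_p$: then the automorphism $(x,y)\mapsto(x,\zeta_m y)$ is $\mathbb{Q}_p$-rational and $H^0(C_{\mathbb{Q}_p},\Omega^1)=\bigoplus_{j=1}^{m-1}V_j$ splits over $\mathbb{Q}_p$ into eigenspaces, with $V_j$ spanned by explicit forms $x^{i}y^{-j}\,dx$. Fix a semistable model of $C$ over the ring of integers of a sufficiently large finite extension of $\mathbb{Q}_p$; this breaks $C(\mathbb{C}_p)$ into tubes -- residue disks over smooth points of the special fibre and annuli over nodes. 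Following Stoll, take $V$ to be the subspace of the smallest eigenspace $V_1$ consisting of differentials that annihilate $\Jac(\mathbb{Q})$ (at most $r$ linear conditions), whose Berkovich--Coleman and Abelian integrals agree (one more), and whose Berkovich--Coleman primitive carries no $p$-adic logarithm (one more); since $\dim V_1\ge\lfloor\deg f/m\rfloor-1$, the hypothesis $r\le\lfloor\deg f/m\rfloor-4$ forces $\dim V\ge 1$. Every $\omega\in V$ then has a single-valued locally analytic primitive $f_\omega$ on $C(\mathbb{C}_p)$ vanishing on $C(\mathbb{Q})$.

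For a tube $T$ and any $\omega\in V$, I would bound $\#\bigl(C(\mathbb{Q})\cap T\bigr)$ by the number of zeros of $f_\omega$ on $T$; since one may pick a different optimal $\omega\in V$ on each $T$, the global bound is the sum of these local counts. On a residue disk over a smooth point $\bar z$, write $\omega=w(t)\,dt$ in a uniformizer; the Newton polygon of $w$ is controlled by the order of the reduced differential at $\bar z$, and Coleman's estimate gives at most $\operatorname{ord}_{\bar z}(\omega)+\lfloor\operatorname{ord}_{\bar z}(\omega)/(p-1)\rfloor+1$ zeros -- the source of the $p$-dependence. On an annulus over a node, $\omega=w(t)\,\tfrac{dt}{t}$ with $w$ a Laurent polynomial, and because the logarithm has been killed $f_\omega$ is again a single-valued Laurent polynomial whose zeros in the annulus are counted by a Newton-polygon argument depending on the width of the annulus and the orders of $\omega$ at its two ends. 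This is where the superelliptic geometry enters concretely: the reduction type at a point where several roots of $f$ collide $p$-adically, and hence all the orders above, are read off from the factorization of $f$ over $\mathbb{C}_p$ together with the eigenvalue $j=1$, generalising the cluster-picture bookkeeping of the hyperelliptic case.

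The remaining work is to sum the local counts over the special fibre and extract a bound linear in $g$ and $r$. The number of disks is at most the number of smooth $\mathbb{F}_p$-points of the special fibre, which for a semistable model of arithmetic genus $g$ is $O\bigl((p+1)(g-1)\bigr)$, and there are $O(g-1)$ annuli; these account for the ``$+1$''s and produce the $(2p+2)(g-1)$ term. The orders $\operatorname{ord}_{\bar z}(\omega)$ summed over disks are bounded by the degree of the base divisor of the linear system cut out by $V$, hence by a Clifford/Riemann--Roch estimate, while the orders at the ends of the various annuli, summed, are controlled by the sizes of the clusters of roots of $f$ (which sum to at most $\deg f$) together with $\dim V$. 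Carrying out Stoll's optimisation over tubes and eigenspaces with these inputs -- and absorbing the $p$-adic corrections -- assembles the terms $(8g-8)(r+3)$, $2m(r+3)$, and $4r$, giving $\#C(\mathbb{Q})<(8g-8)(r+3)+2m(r+3)+(2p+2)(g-1)+4r$.

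The main obstacle is the interplay of the last two steps for $m>2$. In the hyperelliptic case the local structure at a cluster of roots is governed by a single parity bit; for $y^m=f(x)$ one needs a genuinely finer ``cluster picture'' recording both the multiplicities and the depths at which roots meet, and the orders to which eigen-differentials vanish or blow up along components and annuli depend on these data and on $j\bmod m$. The delicate point is to show that, after this more intricate bookkeeping, the per-annulus contributions still sum to something linear -- not quadratic -- in $g$, which forces one to exploit the special structure of $V_1$ rather than a generic differential, and to keep the Newton-polygon bounds uniform in the prime $p$, which may be large. Once these are in place, the combinatorial optimisation over the special fibre runs along the lines of Stoll's argument.
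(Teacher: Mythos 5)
Your overall strategy matches the paper's: Stoll-style Chabauty at a prime $p\equiv 1\pmod m$ (so that $\zeta_m\in\mathbb{Q}_p$ and $e=f=1$), a decomposition of $C(\mathbb{C}_p)$ into discs and annuli, eigendifferentials of the form $x^i\,dx/y$, and Newton-polygon zero counts summed over tubes. But there is a genuine gap exactly at the point you flag as ``the delicate point,'' and it is not a detail: it is the step that makes the bound linear rather than quadratic in $g$. Knowing only that $\dim V\geq 1$ inside the eigenspace $V_1$ tells you nothing about \emph{which} element of $V$ you get; a generic $\omega\in V$ is a combination of $x^i\,dx/y$ for $i$ up to roughly $\deg(f)/m$, so its pullback to an annulus is a Laurent series whose Newton polygon has width on the order of $\deg f$, and summing over annuli then yields a bound quadratic in $g$. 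The paper's fix is a second dimension count: let $W$ be the span of $x^i\,dx/y$ for $0\leq i\leq r+2$ only (dimension $r+3$, which exists by the hypothesis $r\leq\lfloor\deg(f)/m\rfloor-4$); since the Chabauty-plus-comparison subspace has dimension at least $g-r-2$, the intersection $W\cap V$ is nonzero, and any nonzero $\omega$ there pulls back on each annulus to $v(z)u(z)\,dz/z$ with $u$ nonvanishing and $v$ a Laurent polynomial whose exponents span at most $m(r+2)/d+1$. That width bound, times $\mu\leq 2$, times the number of annulus orbits, is what produces $(8g-8)(r+3)+2m(r+3)$. You would need to state and use this intersection argument (it works equally well inside $V_1$, since $(r+3)+(\dim V_1-r-2)>\dim V_1$), not merely the nonvanishing of $V$.

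Two further ingredients are asserted rather than supplied. First, the explicit local form of the $x^i\,dx/y$ on each annulus requires classifying the annuli of $C$ together with parametrizations; the paper gets this from Raynaud's classification of order-$m$ automorphisms of the $p$-adic disc and annulus (prime to $p$), applied to the superelliptic automorphism, which in particular rules out the orientation-reversing case for $m>2$. Your ``cluster picture'' bookkeeping is a plausible substitute but is left entirely schematic. Second, your count of ``$O(g-1)$ annuli'' is too coarse to yield the stated constants: the paper counts \emph{orbits} of annuli under the superelliptic action by pushing the skeleton down to $\mathbb{P}^1$ and applying Riemann--Hurwitz ($2g-2=m(s-1)-\gcd(m,\deg f)-\sum_i\gcd(m,n_i)$, whence $s\leq(4g-4)/m+4$ and at most $(4g-4)/m+1$ orbits). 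Without that $1/m$ saving the $(8g-8)(r+3)$ term does not come out.
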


Much of Stoll's work transfers from the hyperelliptic case to the superelliptic case, but a number of interesting differences arise when $m > 2$ that shed light on uniformity conjectures for general curves. Perhaps surprisingly, the most technical input for our generalization is Raynaud's classification of order $m$ automorphisms of the $p$-adic disc and annulus when $(p,m) = 1$. (Incidentally, the $p \mid m$ case is much more involved and plays a large part in the solution of the local lifting conjecture in \cite{oort}.)



\section*{Acknowledgements}
This research would never have been possible without the help of my advisor David Zureick-Brown and my graduate student advisor Jackson Morrow. I am grateful also to Andrew Obus, who directed me to Raynaud's work on quotients of semistable curves. Last but not least, I am indebted to the entire Emory Mathematics Department and the Robert W. Woodruff Scholarship for supporting me throughout my mathematical journey thus far.

\subsection{Notation}

We now fix some notation for the rest of our discussion. From here onward, $p$ will always be a prime that does not divide $m$. We will let $K$ be a number field, and $k$ the completion of $K$ at some prime $\mathfrak{p}$ lying over $p$. We denote by $\zeta_m$ a fixed primitive $m$\ts{th} root of unity in $k$ or $K$ depending on the context. For any extension of $p$-adic fields, $e$ will denote the ramification index of the extension, and $f$ will denote the residue degree.

Let $C$ be a superelliptic curve over a number field $K$ defined by an affine equation $$y^m = f(x) = \prod_{j=1}^s (x - \theta_j)^{n_j}$$ for some $m > 2$ and $f \in K[X]$.  We will assume $C$ defines a curve of genus at least three, and that $n_j < m$ for each $j$. In any case, $C$ comes with an automorphism $\tau$ of order $m$ given by $y \mapsto \zeta_m y$, and the quotient of $C$ by the subgroup generated by $\tau$ is $\mathbb{P}^1$. We will let $\pi$ denote the projection from $C$ to $\mathbb{P}^1$.

Recall that the $p$-adic unit open disc of radius $r$ is just the set $$D_{0,k} = \{y \in \mathbb{C}_p \mid |y| < 1\}.$$ Now suppose $\alpha$ is in the value group of $k$. We define the $p$-adic annulus with outer radius one and inner radius $\alpha$ by $$A_{\alpha,k} = \{y \in \mathbb{C}_p \mid \alpha < |y| < 1\}.$$ We have emphasized the field $k$ in this notation because it is important that all changes of coordinate respect base fields. Finally, $C$ has a $g$-dimensional vector space of $p$-adic differentials denoted $H^0(C_k,\Omega^1_C)$.

\section{Chabauty's Method and Stoll's Improvement}

\subsection{The Technique}

We now describe in more detail the technology behind Chabauty's method. For this section $C$ is any curve of genus at least two. Now \cite[Section 5]{stoll2013uniform} describes a model of $C$ for which every rational point is contained in a $p$-adic disc or annulus. This model mimics the ideal situation in which $C$ is semistable, since in that case if $x$ is any point of the special fiber of $C$, then $]x[$ is either a disc or an annulus depending on whether $x$ is smooth or singular, respectively (see \cite[Proposition 2.3]{bl}). From here on we suppose that there is some $K$-rational $P \in \, ]x[$\,; otherwise our bounds are trivially satisfied.

The key lemma of Chabauty's method revolves around a $p$-adic integral which vanishes on the $p$-adic closure of $\Jac(\mathbb{Q})$.

\begin{defn}

Let $A$ be an abelian variety over $\mathbb{C}_p$. The abelian logarithm on $A$ is the unique homomorphism of $\mathbb{C}_p$-Lie groups $\log \colon A(\mathbb{C}_p) \to \Lie(A)$ such that $$d\log \colon \Lie(A) \to \Lie(\Lie(A)) = \Lie(A)$$ is the identity map. Now by definition, $\Lie(A)$ is the dual of $H^0(A,\Omega^1_A),$ and we denote the evaluation pairing by $\langle \cdot,\cdot \rangle$.

Finally, for $x \in A(\mathbb{C}_p)$ and $\omega \in H^0(A,\Omega^1_A)$ we set $$\abint{O}{x} \omega \defeq \langle \log(x), \omega \rangle $$ and we call $\abint{}{}$ the abelian integral on $A$. We also define, for $y \in A(\mathbb{C}_p)$, $\abint{x}{y} \omega = \abint{O}{x} \omega - \abint{O}{y} \omega$. (We have let $O$ denote the identity of $A$.) When $C$ is a curve and $\iota$ is an Abel--Jacobi mapping into its Jacobian, we define the integral from $x \in C$ to $y \in C$ by integrating from $\iota(x)$ to $\iota(y)$ in $\Jac$.

\end{defn}

Fixing a residue tube $]x[$\,, Chabauty's method centers around the subspace $V_{\text{chab}}$ of differentials such that $$f_\omega(z) \defeq \abint{P}{z} \omega = 0$$ for all $z \in \Jac(\mathbb{Q}) \cap \, ]x[\,$. Since $C(\mathbb{Q}) \subseteq \Jac(\mathbb{Q})$, every rational point of $C$ is a zero of $f_\omega$ for all $\omega \in V_{\text{chab}}$. The following lemma powers Chabauty's method.

\begin{lem} \label{vchab}
Let $r \defeq \rk(\Jac(\mathbb{Q}))$. Then $\dim(V_{\text{\emph{chab}}}) \geq g-r$.
\end{lem}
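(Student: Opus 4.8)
The plan is to exhibit $V_{\text{chab}}$ as the kernel of a linear map whose target has dimension at most $r$. Concretely, consider the map $\Phi\colon H^0(C_k,\Omega^1_C) \to \operatorname{Hom}(\Jac(\mathbb{Q}),k)$ (or rather $\operatorname{Hom}(\Jac(\mathbb{Q})\otimes\mathbb{Z}_p, k)$ after completing) sending $\omega$ to the functional $z \mapsto \abint{}{}$ of $\omega$ along $z$, i.e. $\omega \mapsto \langle \log(\cdot),\omega\rangle$. The first step is to record that this is $k$-linear in $\omega$: linearity of the abelian integral in the differential is immediate from the definition $\abint{O}{x}\omega = \langle \log(x),\omega\rangle$ together with linearity of the evaluation pairing $\langle\cdot,\cdot\rangle$ in its second argument.

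Next I would show that the target space, or at least the image of $\Phi$, has dimension at most $r$. The point is that $\log\colon \Jac(\mathbb{C}_p)\to \Lie(\Jac)$ is a homomorphism of $\mathbb{C}_p$-Lie groups with finite kernel (the torsion subgroup) and open image; hence $\log(\Jac(\mathbb{Q}))$ spans a $\mathbb{Q}_p$-subspace (equivalently $k$-subspace, after scalar extension) of $\Lie(\Jac)$ of dimension equal to the rank of the finitely generated group $\Jac(\mathbb{Q})$, namely $r$. Call this subspace $W \subseteq \Lie(\Jac)$, so $\dim_k W \le r$ (in fact $\le r$ over $\mathbb{Q}_p$, and tensoring up does not change the bound). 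A differential $\omega$ lies in $V_{\text{chab}}$ precisely when $\langle w,\omega\rangle = 0$ for every $w$ in a generating set of $\log(\Jac(\mathbb{Q})\cap\,]x[)$, and a fortiori it suffices that $\omega$ annihilate all of $W$; conversely the condition defining $V_{\text{chab}}$ only sees $z\in\Jac(\mathbb{Q})\cap\,]x[$, so $V_{\text{chab}}$ contains the annihilator $W^{\perp} \subseteq H^0(C_k,\Omega^1_C) = \Lie(\Jac)^{\vee}$ of $W$.

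Finally I would conclude by dimension count: for any subspace $W$ of a $g$-dimensional $k$-vector space $\Lie(\Jac)$ with $\dim_k W \le r$, the annihilator $W^\perp$ in the dual has dimension $g - \dim_k W \ge g - r$. Since $W^\perp \subseteq V_{\text{chab}}$ we obtain $\dim_k V_{\text{chab}} \ge g - r$, as desired. A small bookkeeping point worth addressing explicitly: one should clarify whether ranks are being measured over $\mathbb{Q}_p$ or over $k$, and note that since $\Jac(\mathbb{Q})\otimes\mathbb{Q}_p$ has dimension $r$ and scalar extension to $k$ preserves this, the bound $g-r$ is unaffected.

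The main obstacle is the claim that $\dim_k \log(\Jac(\mathbb{Q}))\text{-span} \le r$, i.e.\ controlling the image of the $p$-adic logarithm on the Mordell--Weil group. This requires knowing that $\log$ has finite kernel on $\Jac(\mathbb{C}_p)$ and that a finitely generated group of rank $r$ cannot span more than an $r$-dimensional $\mathbb{Q}_p$-subspace after applying a homomorphism into a $\mathbb{Q}_p$-vector space — both standard, but the honest version uses that $\Jac(\mathbb{Q})$ embeds into $\Jac(\mathbb{Q}_p)$ and that the formal-group logarithm is injective on a finite-index subgroup. Everything else is linear algebra over $k$ and unwinding the definition of $f_\omega$ and $V_{\text{chab}}$.
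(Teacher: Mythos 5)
Your argument is correct and is the standard Chabauty codimension count: the image of the rank-$r$ group $\Jac(\mathbb{Q})$ under $\log$ spans a subspace $W\subseteq\Lie(\Jac)$ of dimension at most $r$ (torsion dies in a $\mathbb{Q}_p$-vector space, so finiteness of the kernel of $\log$ is not actually needed for this direction), and $W^{\perp}\subseteq V_{\text{chab}}$ has dimension at least $g-r$. The paper states this lemma without proof, as it is classical, and your write-up supplies exactly the expected argument.
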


The second integral that one can define on the curve $C$ is called the Berkovich--Coleman integral -- it is essentially defined by formal anti-differentiation and is therefore (1) dependent just on the endpoints of the integral on annuli, discs, and more general ``wide opens" and (2) it is amenable to the tools of Newton polygons.

\begin{defn}
Suppose $\omega$ is a differential on the annulus $A_{\alpha,k}$ with local coordinate $T$. Consider the local power series expansion of $\omega$: $$\omega = g(T) \frac{dT}{T} = \sum_{n=-\infty}^{\infty} a_n T^n \frac{dT}{T}$$ where $g(T)$ converges on $A_{\alpha,k}$. Denote by $f$ the (incomplete) formal antiderivative of $g$ given by $$\sum_{n \neq 0} \frac{a_{n-1}}{n} T^{n}.$$
Finally, let $$\Log(T) = \sum_{n=1}^\infty (-1)^{n+1} \frac{(T-1)^n}{n} $$ be the branch of the logarithm where $\Log(p) = 0$. Then we define the Berkovich--Coleman integral on $C$ via the formula $$\bcint{x}{y} \omega = (f(y) + a_0 \Log(y)) - (f(x) - a_0 \Log(x)).$$

\end{defn}

Stoll proves in \cite{stoll2013uniform} the following integral comparison theorem, which \cite{katz2015uniform} reconceptualizes using analytic uniformization of $p$-adic abelian varieties.

\begin{thm} \label{stoll}
Suppose $x,y \in C(k)$ both lie in the same annulus $\phi \colon A_{\alpha, k} \to C$. Then there is a codimension two subspace of $H^0(C_k,\Omega^1_C)$ such that for every differential $\omega$ in that subspace, $$\abint{x}{y} \omega = \bcint{x}{y} \omega.$$ 
\end{thm}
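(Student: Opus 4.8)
The plan is to fix the annulus $\phi\colon A_{\alpha,k}\to C$, expand $\omega$ in the local coordinate $T$ on $A_{\alpha,k}$, and compare the two integrals term by term; they can only disagree through the ``monodromy'' of the abelian integral around the annulus, and this disagreement is a single linear functional of $\omega$.

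First I would write $\omega|_{A_{\alpha,k}} = \bigl(\sum_{n} a_n T^n\bigr)\tfrac{dT}{T}$, so that by definition $\bcint{x}{y}\omega$ is the honest primitive $f(T)+a_0\Log T$ (with $f=\sum_{n\neq 0}\tfrac{a_{n-1}}{n}T^n$) evaluated between the endpoints. Both $\bcint{x}{y}\omega$ and $\abint{x}{y}\omega$ are primitives of $\omega$ on the annulus: for the abelian integral this follows because $d\log$ is the identity, which forces $d\langle\log\iota(\cdot),\omega\rangle=\iota^*\omega$. Hence the difference $\Delta(x,y)\defeq\abint{x}{y}\omega-\bcint{x}{y}\omega$ has zero derivative as a differential. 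The crux is that $\Delta$ need not be constant: $\bcint{}{}$ is built from the globally defined $f$ and the Iwasawa $\Log$, whereas the abelian logarithm restricted to the annulus is \emph{not} a single rigid-analytic function --- it picks up a term linear in the valuation of $T$, because the annulus can wind around a loop in the skeleton of $\Jac$.

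To pin this down I would pass to the non-archimedean uniformization $0\to M\to\widetilde J\to\Jac\to 0$ (the viewpoint of \cite{katz2015uniform}): $\widetilde J$ is an extension of a good-reduction abelian variety by a torus, $M\cong\mathbb Z^{t}$ is the period lattice, and $H^0(\Jac,\Omega^1)\cong H^0(\widetilde J,\Omega^1)$. On the toric part the abelian logarithm of $\widetilde J$ is the Iwasawa $\Log$ of the torus coordinates $u_i$, but $\log_{\Jac}$ must kill $M$, which forces a correction linear in the tropicalization $\bigl(v(u_1),\dots,v(u_t)\bigr)$ and expressed through the period matrix $(\Log q_{ij})$ and the valuation matrix $(v(q_{ij}))$. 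Pulling this back along $\iota|_{A_{\alpha,k}}$ --- writing $u_i\circ\iota = T^{m_i}\cdot(\text{unit})$ and letting $m_\phi=(m_i)\in M$ be the loop class traced out by the annulus --- one obtains
\[
\abint{x}{y}\omega-\bcint{x}{y}\omega \;=\; \beta_\phi(\omega)\cdot\bigl(v(T(y))-v(T(x))\bigr),
\]
where $\beta_\phi$ is the linear functional pairing $\omega$ against $m_\phi$ through the period matrix. The single condition $\beta_\phi(\omega)=0$ therefore forces $\abint{}{}=\bcint{}{}$ on $A_{\alpha,k}$; the statement asks for a \emph{codimension-two} subspace because, following Stoll, one also wants the common value to be a Laurent polynomial with no logarithmic term --- equivalently, the residue $a_0=\operatorname{Res}_\phi(\omega)$ should vanish --- which is a second, independent linear condition. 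Intersecting $\ker\beta_\phi$ with $\ker\operatorname{Res}_\phi$ gives the subspace.

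The main obstacle is the middle step: identifying $\Delta$ with the explicit monodromy functional $\beta_\phi$. This requires a model in which residue classes genuinely are discs and annuli (cf.\ \cite[Section~5]{stoll2013uniform}); control of how the Abel--Jacobi map meets the uniformization, i.e.\ the expansion $u_i\circ\iota=T^{m_i}\cdot(\text{unit})$ on $A_{\alpha,k}$; and careful bookkeeping of the tropical correction to $\log_{\Jac}$. Alternatively one can follow Stoll's original route, working directly from the semistable model and the two-variable expansion of $\omega$ near the node, arriving at the same two linear conditions; either way the content is a comparison of a formal antiderivative with a genuine abelian logarithm taken across a loop.
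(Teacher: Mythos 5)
This theorem is not proved in the paper at all: it is imported from \cite{stoll2013uniform}, with a one-line remark that \cite{katz2015uniform} reinterprets it via analytic uniformization of $p$-adic abelian varieties. Your sketch follows exactly that reinterpretation, and its skeleton is sound. Both integrals are primitives of $\omega$ on the annulus; their difference $\Delta$ has vanishing differential but is only \emph{locally} analytic, so it need not be constant; and the failure of constancy is governed by the period lattice in the uniformization $0\to M\to\widetilde J\to\Jac\to 0$, yielding $\abint{x}{y}\omega-\bcint{x}{y}\omega=\beta_\phi(\omega)\bigl(v(T(y))-v(T(x))\bigr)$ for a single linear functional $\beta_\phi$ attached to the loop class of the annulus. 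Your accounting of the codimension also matches the paper's own prose: one linear condition ($\beta_\phi(\omega)=0$) forces the two integrals to agree, and the second, independent-looking condition ($a_0=\operatorname{Res}_\phi(\omega)=0$) is imposed to remove the $\Log$ term so that the subsequent Newton-polygon zero count applies. (Strictly, the two functionals need not be independent, so ``codimension two'' should be read as ``codimension at most two''; this only helps.)

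That said, what you have is a correct plan rather than a proof. The identity $\Delta=\beta_\phi(\omega)\bigl(v(T(y))-v(T(x))\bigr)$ --- which you rightly flag as the main obstacle --- \emph{is} the theorem: it requires showing that the Abel--Jacobi map carries the annulus into the uniformization so that each toric coordinate satisfies $u_i\circ\iota=T^{m_i}\cdot(\text{unit})$ with the unit of constant valuation, and that the unique homomorphic $\log_{\Jac}$ is the Iwasawa $\Log$ corrected by a term linear in the tropicalization determined by the period matrix. None of that is carried out here, and it is exactly the content of Stoll's Proposition and of the KRZB treatment. A useful sanity check you could add: for a Tate curve $\mathbb{G}_m/q^{\mathbb{Z}}$ and $\omega=du/u$, the unique homomorphism killing $q$ is $\Log(u)-\frac{\Log(q)}{v(q)}v(u)$, so $\beta(\omega)=-\Log(q)/v(q)$, which exhibits the general shape of the functional you need.
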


Now, by Lemma~\ref{vchab} and Theorem~\ref{stoll}, there exists a differential which satisfies the equality of the Berkovich--Coleman and Abelian integrals and also lies in $V_{\text{chab}}$: if $r < g-2$ we conclude that there is a nonzero subspace $W_{\text{chab}}$ of differentials such that $\dim(W_{\text{chab}}) \geq g-r-2$ and the Abelian and Coleman integrals are equal on $W_{\text{chab}}$. Since the integrals agree on $W_{\text{chab}}$, they both vanish on the $p$-adic closure of the rational points of the Jacobian. The main work of this paper is to bound the number of zeros of such a differential on a residue tube.

To summarize: there is a ``relatively large" subspace $W_{\text{chab}}$ such that: (1) the integrals of elements of $W_{\text{chab}}$ vanish on $C(\mathbb{Q})$ (owing to the Abelian integral), and (2) the integral can be computed using formal antiderivatives (owing to the Berkovich--Coleman integral).

\section{Automorphisms of an Annulus}
Emulating \cite{stoll2013uniform}, we proceed with the following process. A rational point on the curve $C$ is contained in a disc or an annulus, so it is sent to another disc or annulus by the superelliptic automorphism (it may be sent to itself). Since we know the possible automorphisms of a disc and annulus, we know the behavior of the discs and annulus under the quotient map. Finally, once we know the ramification behavior of the quotient map we use it to write explicit equations for the annulus on the curve. (By this, we mean explicit equations of the map from the standard annulus to the curve.) Having this equation in hand, we can explicitly describe a basis for the differentials on the annulus or disc.

In \cite{stoll2013uniform}, Stoll classifies the involutions of the $p$-adic disc and annulus ``by hand." We here describe more general results of Raynaud that classify finite order automorphisms of the $p$-adic annulus and disc using algebraic methods. The following theorem is a corollary of \cite[Propositions 2.3.1 and 2.3.2]{raynaud1999}. Recall that the ring of analytic functions on the annulus $A_{\alpha, k}$ is $\mathcal{O}[[X,Y]]/(XY = p^e)$ for some integer $e \geq 1$. Its reduction thus consists of two branches, which may be either fixed or interchanged by an automorphism of the annulus.

\begin{thm}\label{raynaud}
\begin{enumerate}
\item[]
\item
Let $\tau \colon D_{0,k} \to D_{0,k}$ be an analytic map of order $m$, with $m$ coprime to $p$. Then after an analytic change of coordinates defined over $k$, $\tau$ is just multiplication by an $m$\ts{th} root of unity. In particular, $\tau$ has only one fixed point and $D_{0,k}/ \langle \tau \rangle$ is a disc.

\item
Let $\tau \colon A_{\alpha,k} \to A_{\alpha,k}$ be an analytic map of order $m$ such that $\tau$ fixes each of the branches of the reduction of $A_{\alpha,k}$. Then after an analytic change of coordinates defined over $k$, $\tau$ is just multiplication by an $m$\ts{th} root of unity. In particular, $\tau$ has no fixed points and $A_{\alpha,k}/ \langle \tau \rangle$ is an annulus.

\item
Now suppose $m$ is even. Let $\tau \colon A_{\alpha,k} \to A_{\alpha,k}$ be an analytic map of order $m$ that interchanges the branches of the special fiber. Then after an analytic change of coordinates defined over $k$, $\tau$ is given by either $z \mapsto \zeta_m /z$ or $z \mapsto \zeta_{\frac{m}{2}} a /z$ for some $a$ with $|a| = \alpha$. In particular, $\tau$ has two fixed points and $A_{\alpha,k}/ \langle \tau \rangle$ is a disc.

\end{enumerate}
\end{thm}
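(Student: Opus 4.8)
My plan is to read Theorem~\ref{raynaud} as the rigid-analytic shadow of Raynaud's formal classification and to run all three parts off one common engine. Since $\gcd(m,p)=1$, the averaging (Reynolds) operator is available on rings of analytic functions: given any coordinate $t$ on the disc or annulus, set, for each $\zeta\in\mu_m(k)$, $s_\zeta = \sum_{i=0}^{m-1}\zeta^{-i}(\tau^*)^i(t)$. One checks immediately that $\tau^* s_\zeta=\zeta\,s_\zeta$ and that $m^{-1}\sum_{\zeta\in\mu_m}s_\zeta=t$. So in each case the task is (a) to pin down the primitive $m$-th root $\zeta$ for which $s_\zeta$ is again a \emph{global} coordinate, and (b) to read off the resulting normal form for $\tau$ and for the quotient.

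\textbf{Part (1).} First I would produce a $k$-rational fixed point. Choose a formal model $\mathfrak{D}/\mathcal{O}_k$ of $D_{0,k}$; then $\tau$ induces an order-$m$ automorphism $\bar\tau$ of the special fibre $\mathbb{A}^1_{\tilde k}$, and because $\gcd(m,p)=1$ such an automorphism of the affine line is affine and semisimple, hence conjugate to a linear one and carrying a unique fixed point $\bar P$. The action at $\bar P$ is tame, so $\bar P$ lifts to a $\tau$-fixed point $P_0\in D_{0,k}(k)$ by a deformation/Hensel argument. Re-centring so that $P_0$ is the origin, write $\lambda$ for the multiplier $(\tau^* t)/t$ at $0$; then $\lambda^m=1$, and faithfulness of $\langle\tau\rangle$ forces $\lambda$ to be a primitive $m$-th root. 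Taking $\zeta=\lambda$ in the averaging gives $s_\lambda=m\,t+O(t^2)$; verifying that $s_\lambda$ is a coordinate on the \emph{whole} disc, not merely formally near $0$, is where one must invoke the formal-model argument rather than bare power-series manipulation. With that in hand $\tau^* s_\lambda=\lambda\,s_\lambda$, i.e.\ $\tau$ is multiplication by $\lambda$, the fixed point is unique, and $k[[s_\lambda^m]]$ exhibits $D_{0,k}/\langle\tau\rangle$ as a disc.

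\textbf{Part (2).} The crucial observation is that $\tau$ acts on the skeleton $\Sigma$ of the Berkovich annulus --- a compact segment whose two endpoints are the two ends. Since $\tau$ fixes each end it fixes both endpoints of $\Sigma$, and an orientation-preserving self-homeomorphism of a segment fixing its endpoints and having finite order is the identity; hence $\tau$ fixes every Gauss point $\eta_\rho$ and preserves every Gauss norm. Consequently, for a coordinate $z$ with $A_{\alpha,k}=\{\alpha<|z|<1\}$, the function $\tau^* z$ satisfies $|\tau^* z|_{\eta_\rho}=\rho$ for all $\rho\in(\alpha,1)$, which forces $\tau^* z=c_1 z(1+h)$ with $c_1$ a unit and $|h|_{\eta_\rho}<1$ for every $\rho$. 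Reducing on a semistable formal model whose special fibre contains the $\mathbb{G}_m$ linking the two ends, $\tau$ induces an order-$m$ automorphism of $\mathbb{G}_{m,\tilde k}$ fixing both punctures, necessarily $\bar z\mapsto\bar c_1\bar z$ with $\bar c_1$ a primitive $m$-th root; since $\gcd(m,p)=1$, $c_1$ itself is a primitive $m$-th root. Averaging with $\zeta=c_1$ produces a coordinate $s$ with $\tau^* s=c_1 s$, so $\tau$ is multiplication by $c_1$, has no fixed points, and $s^m$ is a coordinate on the quotient annulus $A_{\alpha^m,k}$.

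\textbf{Part (3) and the main obstacle.} When $m$ is even and $\tau$ interchanges the ends, $\tau$ acts on the segment $\Sigma$ as its unique order-two homeomorphism, fixing the midpoint $\eta_{\rho_0}$; thus $\tau^2$ fixes both ends and, by Part~(2), is conjugate to multiplication by a primitive $(m/2)$-th root $\zeta_{m/2}$ in a coordinate $w$. Since $\tau$ swaps the ends, $\tau^* w$ has the shape $(a+\cdots)/w$ with $|a|$ governed by $\rho_0$; pushing this through $\tau^*\circ\tau^*=(\tau^2)^*$ and averaging $\tau$ against the $\tau^2$-eigencomponents of $w$ collapses $\tau^* w$ to a pure monomial, landing on one of Raynaud's two normal forms, from which the two fixed points and the disc quotient are visible. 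Which of the two forms occurs --- the appearance of $\zeta_m$ versus $\zeta_{m/2}a$ with $|a|=\alpha$ --- reflects the valuation and square-class in $k^\times$ of the scalar one must absorb to balance the inversion, and I would read off that dichotomy, together with the precise normalization of the annulus and the meaning of the condition $|a|=\alpha$, directly from Raynaud's Propositions 2.3.1--2.3.2. The real obstacle, common to all three parts, is the \emph{globalization}: the linearization is essentially formal and easy in a neighbourhood of a fixed point or of the skeleton, and upgrading it to an honest analytic change of coordinates over $k$ on the entire disc or annulus is the substantive content of Raynaud's formal-geometric argument, which I would cite rather than reprove.
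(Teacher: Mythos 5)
The paper offers no proof of this theorem at all: it states the result as a direct corollary of Raynaud's Propositions 2.3.1 and 2.3.2, so the only thing to compare your argument against is that bare citation. Your sketch is a correct and essentially standard way to actually prove the statement in the tame case, and it supplies genuine content the paper omits: the averaging operator $s_\zeta = \sum_{i}\zeta^{-i}(\tau^*)^i(t)$ linearizes a prime-to-$p$ action once one has located a fixed point (part 1) or shown that $\tau^* z = \zeta z\cdot(\text{unit})$ (part 2), and you correctly reduce that location step to the induced action on the special fiber, respectively on the skeleton of the Berkovich annulus. The two places you defer to Raynaud are exactly where the work lies, but neither is a gap in your reasoning: (i) globalizing the linearization from a formal neighbourhood to the whole disc or annulus --- for the open disc this is in fact easy, since $s_\lambda = mt + O(t^2)$ has unit linear coefficient and integrally bounded higher coefficients and is therefore automatically an automorphism of the open unit disc, and for the annulus the key point is that each $(\tau^*)^i z = \zeta^i z(1+h_i)$ with $|h_i|<1$ at every Gauss norm, so $s_\zeta = z\cdot(\text{unit of constant norm }1)$ is again a coordinate with the same radii; and (ii) the precise dichotomy of normal forms in part (3). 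On (ii), note that as literally stated $z\mapsto \zeta_m/z$ does not even preserve $\{\alpha<|z|<1\}$; the statement implicitly uses a symmetric normalization of the annulus (as Stoll does in the hyperelliptic case), and your instinct to pin that normalization and the meaning of $|a|=\alpha$ down against Raynaud's propositions is the right one --- the imprecision is in the theorem statement, not in your proof. In short, where the paper has only a citation, you have a correct proof outline resting on the same citation for its hardest step.
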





\section{Annuli on Superelliptic Curves}

Based on the preceding lemmas, we can write explicit equations for the annuli on the curve. We again follow Stoll's notation. Denote by $\Theta$ the multiset of ramification points of $C$ counted with multiplicity. By a change of coordinates we can always assume that infinity is not a branch point: First move any branch points away from zero, then substitute $$x \mapsto \frac{1}{x} \text{ and } y \mapsto \frac{y}{x^{\left \lceil \frac{\deg{f}}{m} \right \rceil}}.$$ The finite roots of $f$ are inverted, and $f$ picks up a new root of multiplicity $m - [\deg(f) \pmod{m}]$ at zero, where $\deg(f) \pmod{m}$ is the smallest representative between 1 and $m$ of the residue class. We can therefore write $$y^m = f(x) = c\prod_{\theta \in \Theta}(x-\theta).$$
If $\theta \neq 0$, we have the $p$-adic functions $$f^+_\theta(x) = \left(1-\frac{\theta}{x} \right)^{1/m}, f^-_\theta(x) = \left(1-\frac{x}{\theta} \right)^{1/m}.$$ These converge $p$-adically when $|x| > |\theta|$ and $|x| < |\theta|$, respectively. They satisfy the equations
$$x - \theta = xf^+_\theta(x)^m \text{ and } x - \theta = -\theta f^-_\theta(x)^m.$$

The following lemmas are generalizations of those in \cite{stoll2013uniform}. The first gives equations for both discs and annuli whose quotient is a disc. In this case, such a quotient is either completely ramified or completely split.

\begin{lem} \label{discs}
Let $D \subseteq \mathbb{P}^1_k$ be an open disc, with $\phi \colon D_{0,k} \to D$ its parametrization by the open unit disc.

\begin{enumerate}
\item
Suppose $D(\mathbb{C}_p) \cap \Theta = \emptyset$. If there exists $\beta \in D(k)$ such that $f(\beta)$ is not an $m$\ts{th} power in $k$, then $\pi^{-1}(D) \cap C(k)$ is empty. If $f(\beta)$ is an $m$\ts{th} power for all $\beta$, then $\pi^{-1}(D)$ is the disjoint union of $m$ disjoint open discs in $C$, each isomorphic to $D$ via $\pi$.

\item
Suppose $D(\mathbb{C}_p) \cap \Theta = \{ \theta_1\}$, and that $D$ has radius $r$. Then $\theta_1 \in k$. Further assume that there is some $\beta \in k$ such that $r|f'(\theta_1)| = |\beta|^m$. Then $\pi^{-1}(D)$ is a disc on $C$, and up to an analytic change of coordinates the map $\pi$ is just the $m$\ts{th} power map (i.e.\@, the superelliptic automorphism acts by rotation).

\item
Suppose $D(\mathbb{C}_p) \cap \Theta = \{ \theta_1, \theta_2\}$. Then $(x-\theta_1)(x-\theta_2)$ has coefficients in $k$. Furthermore, there are two possibilities for the set $\pi^{-1}(D)$. It is either (a) contained in the preimage of the smallest closed disc containing $\theta_1$ and $\theta_2$, or (b) $\pi^{-1}(D)$ is a union of $\frac{m}{2}$ annuli $A^{(j)}$ in $C$ such that for some $\beta \in k^\times$ we have $\pi(z) = z+ \beta/z$ (after an analytic change of coordinates.) In this case $\tau$ acts as $z \mapsto \beta/z$, $j \mapsto j+1$.

\end{enumerate}
\end{lem}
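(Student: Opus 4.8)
The plan is to handle all three parts by the same device: realize $\pi^{-1}(D)$ as a $\langle\tau\rangle$-stable disjoint union of discs and annuli on $C$ (as the model of \cite[Section 5]{stoll2013uniform} guarantees), read off the action of $\tau$ on the components from Raynaud's classification (Theorem~\ref{raynaud}), and keep track of the fixed points of $\tau$ — since $\tau$ sends $(x,y)$ to $(x,\zeta_m y)$, a point of $\pi^{-1}(D)$ is $\tau$-fixed exactly when $y=0$, i.e.\ when it lies over a root of $f$ in $D$. The rationality assertions (that $\theta_1\in k$, that $(x-\theta_1)(x-\theta_2)\in k[x]$) I would get identically in each case: $D$ is defined over $k$, so $\mathrm{Gal}(\overline{k}/k)$ preserves the finite set $D(\mathbb{C}_p)\cap\Theta$, hence its elementary symmetric functions, which are then in $k$.

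For part (1), since $f$ does not vanish on $D$, the map $\pi^{-1}(D)\to D$ is finite étale of degree $m$, given by $y^m=f(x)$ with $f\in\mathcal{O}(D)^\times$. The key sublemma is that a nowhere-vanishing $h\in\mathcal{O}(D)$ which takes an $m$-th power value in $k^\times$ at a single $k$-point of $D$ is already an $m$-th power in $\mathcal{O}(D)$: a nonvanishing function on a $p$-adic open disc has constant absolute value, so after recentering $D$ at that point and dividing by its value there (an $m$-th power in $k^\times$ by hypothesis) I may take $h=1+g$ with $g(0)=0$ and $|g|<1$ throughout $D$; then on each closed subdisc $T^m-h$ meets the hypotheses of Hensel's lemma over the corresponding Tate algebra — here $\gcd(m,p)=1$ is used, so that $mT^{m-1}$ is a unit at $T=1$ while $1-h$ has sup-norm $<1$ — and the unique resulting root patches to an $m$-th root of $h$. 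Granting the sublemma: if $f(\beta)\in(k^\times)^m$ for some $\beta\in D(k)$, then $f=h^m$, so $\pi^{-1}(D)=\bigsqcup_{i=0}^{m-1}\{\,y=\zeta_m^i h(x)\,\}$ is a disjoint union (disjoint since $h$ is nonvanishing) of $m$ copies of $D$, each mapped isomorphically to $D$ by $\pi$, and in particular every value of $f$ on $D(k)$ is then an $m$-th power — this is the second alternative; otherwise no $k$-point of $\pi^{-1}(D)$ can exist, since such a point would lie over some $\beta\in D(k)$ with $f(\beta)\in(k^\times)^m$, which we have just excluded.

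For part (2), after the Galois remark gives $\theta_1\in k$, I would proceed by an explicit coordinate change rather than appeal to Raynaud. Recenter so $\theta_1=0$, write $f(x)=x\,h(x)$ with $h$ nonvanishing on $D$ and $h(0)=f'(\theta_1)\neq 0$, and set $\lambda=\beta^m/f'(\theta_1)\in k^\times$, so $|\lambda|=r$ by hypothesis. Then $x=\lambda s$ identifies $D$ with the unit disc over $k$, and $y=\beta\,w$ turns $y^m=f(x)$ into $w^m=s\cdot\bigl(h(\lambda s)/f'(\theta_1)\bigr)$; the parenthesized factor is nonvanishing and equals $1$ at $s=0$, hence equals $u(s)^m$ by the sublemma of part (1), and replacing $w$ by $w/u(s)$ gives $w^m=s$. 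So $\pi^{-1}(D)$ is the unit disc in $w$, $\pi$ is the $m$-th power map, and $\tau$ becomes $w\mapsto\zeta_m w$ — consistent with Theorem~\ref{raynaud}(1), and with the fact that $\tau$ has exactly one fixed point here, which on its own rules out both annulus cases of Theorem~\ref{raynaud}.

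For part (3), Galois descent gives $(x-\theta_1)(x-\theta_2)\in k[x]$, and now $\tau$ has at most two fixed points on $\pi^{-1}(D)$, over $\theta_1$ and $\theta_2$. By Theorem~\ref{raynaud} this is compatible only with the components of $\pi^{-1}(D)$ being annuli on which powers of $\tau$ act with disc quotient, and the two normal forms of Theorem~\ref{raynaud}(3) are what generate the dichotomy. Case (a) is the degenerate one, where $\theta_1$ and $\theta_2$ lie close together in $D$ relative to the available $m$-th power rescalings, and the analysis is governed by the preimage of the smallest closed disc $D'$ through $\theta_1$ and $\theta_2$ (the complementary annular region $D\setminus D'$ being rootless, hence handled exactly as in part (1)). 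Case (b) is the generic one: I would work over the sub-annulus of $D$ lying outside $D'$, where, recentering $\theta_1=0$, one has $x-\theta_2=x\,f^+_{\theta_2}(x)^m$ and hence $(x-\theta_1)(x-\theta_2)=x^2 f^+_{\theta_2}(x)^m$ — a controlled square times a perfect $m$-th power — so that, after extracting an $m$-th root of the nonvanishing part of $f$ and rescaling as in part (2), each component acquires a coordinate $z$ in which $\pi$ is $z\mapsto z+\beta/z$ and $\tau$ cyclically permutes the $\tfrac{m}{2}$ components while acting as $z\mapsto\beta/z$. I expect that last step — organizing the $\tfrac{m}{2}$ components, their cyclic permutation under $\tau$, and the exact value of $\beta\in k^\times$ — to be the main obstacle, since it is precisely where Raynaud's classification, the explicit superelliptic equation, and the field-of-definition constraints all have to be made to agree simultaneously.
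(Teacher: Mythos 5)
Parts (1) and (2) of your proposal are correct and essentially coincide with the paper's proof. Your Hensel-lemma sublemma (a nonvanishing function on an open disc whose value at one $k$-point is an $m$\ts{th} power is itself an $m$\ts{th} power, using $\gcd(m,p)=1$) is exactly the device the paper packages into the convergent binomial series $f^{\pm}_\theta(x)=(1-\theta/x)^{1/m}$, $(1-x/\theta)^{1/m}$; and your rescaling $x=\lambda s$, $y=\beta w$ in part (2) reproduces the paper's parametrization $(uz^m,\gamma z\,h(uz^m))$. The Galois-descent remarks for $\theta_1\in k$ and $(x-\theta_1)(x-\theta_2)\in k[x]$ are fine (the paper asserts these without comment).

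Part (3) is where there is a genuine gap, and it is not merely the omission you flag at the end. Your plan is to work over the rootless sub-annulus $D\setminus D'$, where the equation becomes $y^m=\gamma^m x^2\cdot(\text{unit})^m$, and then extend over the branch points. But that reduction already determines the covering over $D\setminus D'$: the curve $Y^m=x^2$ over an annulus has $\gcd(m,2)=2$ connected components, each an annulus mapping onto the base by a degree-$m/2$ Kummer map $z\mapsto z^{m/2}$. By contrast, the asserted conclusion --- $m/2$ annuli $A^{(j)}$ each mapping onto $D$ by the degree-two map $z\mapsto z+\beta/z$ --- would restrict over $D\setminus D'$ to $m$ components each of degree one. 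Already for $m=4$ these are different covers of $D\setminus D'$, so the ``reconciliation'' you defer cannot be carried out as described; counting fixed points of $\tau$ via Theorem~\ref{raynaud} does not control the number of components and will not rescue this. (Indeed, for $m=4$ and two simple roots the reduction of the fiber over the Gauss point of $D'$ is the irreducible curve $\bar Y^4=\bar x^2-\bar a$, of positive genus, so $\pi^{-1}(D)$ is not a disjoint union of annuli at all.) Be aware that the paper's own proof of this case is also delicate: its parametrization contains the factor $\bigl(z-\tfrac{a}{4z}\bigr)^{2/m}$, which is single-valued on the annulus only when $m=2$. The saving grace, which your write-up should make explicit if you pursue this route, is that part (3)(b) is used downstream only to show that an orientation-reversing $\tau$-stable annulus cannot occur for $m>2$; that conclusion can be obtained directly from Theorem~\ref{raynaud}(3), since the normal forms $z\mapsto\zeta/z$ there square to the identity and hence cannot have order $m>2$.
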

\begin{proof}
\begin{enumerate}
\item[]
\item
We make a coordinate change on $\mathbb{P}^1$, defined over $k$, so that $D = D_{0,k}$ and $|\theta | \geq 1$ for all $\theta \in \Theta.$ When $\pi^{-1}(D) \cap C(k) \neq \emptyset$, there exists $\gamma \in k^\times$ such that $f(0) = \gamma^m$. Thus on the disc $\pi^{-1}(D)$ the equation for $C$ now takes the form $$y^m = c \prod_{\theta \in \Theta} \theta \left( \prod_{\theta \in \Theta} f^-_\theta \right)^m = \gamma^m h(x)^m.$$ Then $\pi^{-1}(D) = \coprod_i D^{(i)}$, where
$$D^{(i)} \defeq \left \{(z,\zeta_m ^i \gamma h(z)) \mid z \in D\right \}, ~ i=1,\ldots,m.$$

\item
By assumption there exist $\gamma \in k^\times$ and  $u \in \mathcal{O}_k^\times$ such that $\gamma^m = uf'(\theta_1)$. We again change coordinates so that $D = D_{0,k}$ and $\theta_1 = 0.$ On $\pi^{-1}(D)$ the equation for $C$ is given by $$y^m = cx \prod_{0 \neq \theta \in \Theta} (-\theta) \left( \prod_{0 \neq \theta \in \Theta} f^-_\theta \right)^m = c' x h(x)^m. $$ Here $h(x) =  \prod_{0 \neq \theta \in \Theta} f^-_\theta.$ Then we have the parameterization
$$D = \left\{ \left(u z^m, \gamma z h\left(uz^m\right) \right) \mid z \in D_{0,k} \right\}.$$

\item
Changing coordinates again so that $D = D_{0,k}$ and $\theta_1 + \theta_2 = 0$, the equation for the curve is given by $$y^m = c \prod_{\theta \in \Theta \setminus \{\theta_1,\theta_2\}} (-\theta) (x^2 - a) \left( \prod_{\theta \in \Theta \setminus \{\theta_1,\theta_2\}} f^-_\theta \right)^m =  c' (x^2 - a)h(x)^m.$$ Here $h(x) = \prod_{\theta \in \Theta \setminus \{\theta_1,\theta_2\}} f^-_\theta.$ The convergence properties of the $m$\ts{th} root when $(p,m) = 1$ are independent of $m$, and so if $|x| > |a|$ and $c'$ is not an $m$\ts{th} power in $k$, $x$ cannot be the coordinate of a $k$-point. In this case the preimage of $D$ in $C$ is contained in the preimage of $\{ |x| < \theta_1 \}$.

Suppose $c' = \gamma^m$ with $\gamma \in k$. Then we set $$z = \frac{1}{2}\left( x + \left( \frac{y}{\gamma h(x)} \right)^{\frac{m}{2}} \right),$$ and the desired parametrization of $\pi^{-1}(D)$ is given by $$A^{(j)} \defeq \left(z + \frac{a}{4z}, \zeta_{\frac{m}{2}}^j \gamma \left(z - \frac{a}{4z}\right)^{\frac{2}{m}} h\left(z + \frac{a}{4z} \right) \right).$$ Here we've chosen a branch of the $\frac{m}{2}\ts{th}$ root so that $(-1)^\frac{m}{2} = \zeta_m^{-1}$. The map which sends $z \mapsto \frac{a}{4z}$ and $j \mapsto j+1$ fixes the $x$-coordinate and multiplies the $y$-coordinate by $\zeta_m$, so it is the superelliptic automorphism.
\end{enumerate}
\end{proof}

The second lemma describes equations for annuli whose quotient is an annulus, in the process generalizing Stoll's lemma to account for the many possible behaviors of an automorphism of an $m$-to-one mapping. Indeed, while the case $m=2$ separates cleanly into odd and even annuli, we have many more cases.

\begin{lem} \label{annuli}
Let $\phi \colon A_{\alpha,k} \to A \in \mathbb{P}^1_k$ be an open annulus such that $A\cap \Theta = \emptyset$ and $A(k) \neq \emptyset$. The complement of $A$ in $\mathbb{P}^1_k$ is the disjoint union of two closed discs, which partition $\Theta$ into $\Theta_0$ and $\Theta_\infty$. This partition induces a factorization $f(x) = c f_0(x)f_\infty (x)$ with $f_0$ and $f_\infty$ monic such that the roots of $f_0$ are the elements of $\Theta_0$ and the roots of $f_\infty$ are the elements of $\Theta_\infty$. 

Write $d = \gcd(\# \Theta_0, m)$. If $c$ is not a $d$\ts{th} power in $k$, then $\pi^{-1}(A) \cap C(k) = \emptyset$. Assume otherwise, and further suppose that there exists $\gamma \in k$ with $u^dc \left (\prod_{\theta \in \Theta_\infty} (-\theta) \right )= \gamma^m$. Then $\pi^{-1}(A)$ is a union of $d$ annuli, and after an analytic change of coordinates the superelliptic automorphism acts by both interchanging these annuli and rotating them.



\end{lem}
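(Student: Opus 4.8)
The plan is to mimic the proofs of Lemma~\ref{discs} and the $m=2$ case in \cite{stoll2013uniform}: change coordinates so that $A = A_{\alpha,k}$ is the standard annulus, rewrite the defining equation of $C$ in a form where the $m$-th power structure is transparent, extract the connected components of $\pi^{-1}(A)$, and finally identify the induced action of $\tau$ using Theorem~\ref{raynaud}. First I would make a $k$-rational change of coordinates on $\mathbb{P}^1$ taking the outer disc of the complement to $\{|x| \geq 1\}$ and the inner disc to a small disc around $0$, so that $A_{\alpha,k}$ has coordinate $x$ with $\alpha < |x| < 1$; under this the roots in $\Theta_\infty$ satisfy $|\theta| \geq 1$ and the roots in $\Theta_0$ satisfy $|\theta| \leq \alpha$. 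For $\theta \in \Theta_\infty$ use $f^+_\theta$, and for $\theta \in \Theta_0$ use $f^-_\theta$, both of which converge on $A$; then
$$
f(x) = c\prod_{\theta\in\Theta_0}(x-\theta)\prod_{\theta\in\Theta_\infty}(x-\theta)
= c\left(\prod_{\theta\in\Theta_\infty}(-\theta)\right) x^{\#\Theta_0}\, h(x)^m,
$$
where $h(x) = \prod_{\theta\in\Theta_0} f^-_\theta(x)\prod_{\theta\in\Theta_\infty} f^+_\theta(x)$ is a unit power series on $A$. So on $\pi^{-1}(A)$ the curve is $y^m = c'\, x^{\#\Theta_0}\, h(x)^m$ with $c' = c\prod_{\theta\in\Theta_\infty}(-\theta)$.

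Next I would analyze $y^m = c' x^{N} h(x)^m$ with $N = \#\Theta_0$ as a Kummer-type covering. Writing $d = \gcd(N,m)$ and $N = d N'$, $m = d m'$ with $\gcd(N',m')=1$, the substitution $w = y/h(x)$ reduces us to $w^m = c' x^N$, i.e. $(w^{m'}/x^{N'})^{d} = c'$ after noting $\gcd(N',m') = 1$ lets us write $w^{m'} = $ (unit) $\cdot x^{N'} \cdot (\text{a new parameter})$; concretely one introduces a new coordinate $t$ on the source annulus with $x = t^{m'}\cdot(\text{unit})$ and $w = t^{N'}\cdot(\text{unit})$, which is possible because $\gcd(N',m') = 1$ guarantees integers $a,b$ with $aN' + bm' = 1$. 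The equation then collapses to $\zeta^{d} = c'$ for a root of unity $\zeta$, so a $k$-point exists over $A$ iff $c'$ is a $d$-th power in $k$ (here I would be careful that "$c'$ a $d$-th power" is equivalent to the stated hypothesis that $c$ be a $d$-th power together with the auxiliary condition $u^d c\prod_{\theta\in\Theta_\infty}(-\theta) = \gamma^m$, which supplies the needed $m$-th root of $c'$ up to a unit). When $c'$ is a $d$-th power, the $d$ choices of $d$-th root of unity give exactly $d$ components, each of which is an annulus: on each component the map to $A_{\alpha,k}$ is, in the coordinate $t$, the power map $t \mapsto t^{m'}$ up to units, and $x\mapsto x$ forces the outer/inner branches of the reduction to be preserved, so by Theorem~\ref{raynaud}(2) each component is genuinely an annulus and $\tau$ restricted to a component (when it preserves that component) is multiplication by a root of unity.

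Finally I would pin down the global action of $\tau$. The automorphism $\tau\colon y\mapsto \zeta_m y$ fixes $x$, hence preserves $A$ and permutes the $d$ components; tracking $\tau$ through the identification $w = y/h(x)$ and $w^{m'} \sim x^{N'}\cdot t^{?}$ shows $\tau$ multiplies $t$ by an appropriate root of unity, which both cyclically permutes the $d$ components (since $\zeta_m$ acts on the set of $d$-th roots of $c'$ through $\zeta_m^{m'} = \zeta_d$, a primitive $d$-th root of unity) and, on the stabilizer of a component (a cyclic group of order $m' = m/d$), acts by rotation — matching the "interchanging these annuli and rotating them" in the statement. The main obstacle I expect is the bookkeeping in the middle step: getting the gcd reduction $w^m = c'x^N \leadsto$ a clean power-map in a single well-chosen $k$-rational coordinate $t$, and verifying that the arithmetic hypothesis on $c$ and the unit $u$ is exactly what is needed for that coordinate to be defined over $k$ (as opposed to over a ramified extension). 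Everything else is a routine transcription of Stoll's argument, now organized around Raynaud's classification rather than an ad hoc involution analysis.
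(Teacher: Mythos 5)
Your proposal follows essentially the same route as the paper: rewrite the equation over the standard annulus as $y^m = c'\,x^{\#\Theta_0}h(x)^m$ with $h$ a unit power series built from the $f^{\pm}_\theta$, use $d=\gcd(\#\Theta_0,m)$ to parametrize the $d$ components by a power map $z\mapsto z^{m/d}$ in the $x$-coordinate (your coordinate $t$ is the paper's $z$, with the unit $u$ and the $m$-th root $\gamma$ absorbing the constants exactly as you anticipated), and identify the $\tau$-action via the B\'ezout relation $a\frac{\#\Theta_0}{d}+b\frac{m}{d}=1$ as a simultaneous rotation and cyclic permutation of the components. The bookkeeping you flag as the main obstacle is precisely what the paper's explicit formula for $A^{(j)}$ carries out, so the proposal is correct and matches the paper's argument.
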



\begin{proof}
By construction $\# \Theta_0$ is invertible modulo $m/d$; let $\# \Theta_0^{-1}$ denote the smallest integer representative for its inverse in this ring. Then $\# \Theta_0 \# \Theta_0^{-1} = 1 + n(m/d)$ for some integer $n$.

Set $h = \left( \prod_{\theta \in \Theta_\infty} f_\theta^+ \right) \left( \prod_{\theta \in \Theta_0} f_\theta^- \right).$ Then the equation for our curve becomes $$y^m = c \left (\prod_{\theta \in \Theta_\infty} (-\theta) \right) x^{\# \Theta_0} h(x)^m.$$ The parameterization is given by $$A^{(j)} = \left \{ \left(u^{d \# \Theta_0^{-1}}z^{m/d}, \zeta_m^{jm/d}\gamma u^n z^{\# \Theta_0 / d}h\left(u^{d \# \Theta_0^{-1}}z^{m/d}\right)\right) \Bigm \vert z \in A_{0,k} \right \}, ~ j=1, \ldots, d.$$ To see the effect of multiplication by $\zeta_m$ on the $y$-coordinate, write $1 = a \frac{\# \Theta_0}{d} + b \frac{m}{d}$ by the Euclidean algorithm. Then we see that the action $z \mapsto \zeta_m^a$, $j \mapsto j+b$ is the same as multiplication by $\zeta_m$ in the $y$-coordinate, and it is thus the superelliptic involution. 
\end{proof}

Combining Raynaud's classification of automorphisms of the annulus with the preceding two lemmata, we conclude the following.

\begin{lem}
The preceding two lemmas provide an exhaustive list of the parameterizations of maximal annuli on the curve $C$.
\end{lem}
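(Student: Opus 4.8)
The plan is to reduce the statement to Raynaud's classification (Theorem~\ref{raynaud}) by tracking an arbitrary maximal annulus of $C$ through the quotient map $\pi$, just as Stoll does for involutions in \cite{stoll2013uniform}. First I would fix, following \cite[Section 5]{stoll2013uniform}, a model of $\mathbb{P}^1_k$ adapted to $\Theta$ and to the $K$-rational points, so that $\mathbb{P}^1_k$ is partitioned into maximal open discs and annuli with every point of $\Theta$ (and every relevant rational point) lying in a disc; normalizing $C$ in this model produces a $\langle\tau\rangle$-stable model of $C$. Consequently every maximal open annulus $A$ of $C$ is a tube over a node whose $\langle\tau\rangle$-orbit maps onto a single tube $U$ of $\mathbb{P}^1_k$. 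Writing $H=\operatorname{Stab}_{\langle\tau\rangle}(A)$, a cyclic group of order $d'\mid m$, we get $\pi^{-1}(U)=\coprod_{i}\tau^i(A)$, a disjoint union of $m/d'$ maximal annuli, and $\pi$ identifies $U$ with the quotient $A/H$.

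Next I would split into cases according to whether the tube $U$ is an annulus or a disc. If $U$ is an annulus, then $U\cap\Theta=\emptyset$ by the choice of model; the generator $\tau^{m/d'}$ of $H$ cannot interchange the two branches of the reduction of $A$ (that would make $A/H=U$ a disc by Theorem~\ref{raynaud}(3)), so Theorem~\ref{raynaud}(2) makes $H$ act on $A$ by a free rotation. That is exactly the hypothesis of Lemma~\ref{annuli}, whose parameterization — with $d=\gcd(\#\Theta_0,m)=m/d'$ — then describes $A$. If instead $U$ is a disc, then since $A$ is an annulus mapping onto the disc $A/H=U$, the $H$-action on $A$ must be of the type in Theorem~\ref{raynaud}(3) rather than (2) (the latter yields an annular quotient); hence a generator of $H$ interchanges the branches of $A$ and is an involution with exactly two fixed points, forcing $d'=2$. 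Because $\pi\circ\tau=\pi$, every ramification point of $\pi$ over $U$ is a $\langle\tau\rangle$-translate of one of these two fixed points and therefore maps into the same two-element subset of $U$; thus $U$ meets $\Theta$ in exactly two points. This is case~(3) of Lemma~\ref{discs}, and maximality of $A$ rules out sub-case~(3a) (in which $\pi^{-1}(U)$ is swallowed by the preimage of a strictly smaller disc), leaving the parameterization~(3b).

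Finally I would record that nothing is over-counted: cases~(1) and~(2) of Lemma~\ref{discs} have preimages that are disjoint unions of discs, not annuli, so they contribute no maximal annuli of $C$. Combining the two cases then shows that every maximal open annulus of $C$ is parameterized by exactly one of the formulas of Lemmas~\ref{discs} and~\ref{annuli}. I expect the main obstacle to be the bookkeeping that matches Raynaud's purely local fixed-point count with the global branch locus of $\pi$ in $U$: one must verify that the two fixed points supplied by Theorem~\ref{raynaud}(3) on a single component account for \emph{all} of $U\cap\Theta$, which is where $\langle\tau\rangle$-equivariance of the model and the identity $\pi\circ\tau=\pi$ do the real work. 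A subsidiary point needing care is the existence of a $\langle\tau\rangle$-stable model in the first place, which is why I would obtain it by normalizing a model of the quotient $\mathbb{P}^1$ rather than working directly with a model of $C$.
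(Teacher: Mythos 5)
Your proposal follows the same basic strategy as the paper --- push a maximal annulus through $\pi$ and invoke Raynaud's classification (Theorem~\ref{raynaud}) to decide what the image tube and the parameterization must be --- but it organizes the case analysis differently, and the difference is substantive. The paper's trichotomy is on the action of $\tau$ \emph{itself}: $\tau(A)\cap A=\emptyset$; $\tau(A)=A$ with $\tau$ fixing the branches; $\tau(A)=A$ with $\tau$ swapping them. In the third case the paper derives a contradiction for $m>2$ (if $\pi(A)$ is a disc with two branch points, then $\pi^{-1}(\pi(A))$ is $m/2>1$ annuli cyclically permuted by $\tau$, contradicting $\tau(A)=A$), and so concludes that \emph{every} maximal annulus for $m>2$ arises from Lemma~\ref{annuli}. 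Your dichotomy is instead on the image tube $U$ together with the stabilizer $H=\langle\tau^{m/d'}\rangle$, and you keep the branch-swapping case alive: the stabilizer has order $2$, $\tau$ permutes the $m/2$ annuli over $U$, and each is parameterized by Lemma~\ref{discs}(3b). Your framing is actually the more careful one in one respect: the paper's first case tacitly assumes that $\tau(A)\cap A=\emptyset$ forces the full stabilizer of $A$ in $\langle\tau\rangle$ to be trivial (``$\pi$ is an isomorphism onto its image''), which is exactly the possibility your stabilizer bookkeeping does not let slip through. Since the lemma only asserts that the two preceding lemmas are \emph{exhaustive}, your conclusion still proves the stated result.

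That said, you should flag the point where your account and the paper's genuinely disagree, because it propagates into the zero-counting later (the paper's annulus count explicitly restricts to ``non-inverting'' orbits). If the disc case does occur for $m>2$, you owe a check that Lemma~\ref{discs}(3b) really furnishes a single-valued parameterization there: as written it involves $\left(z-\tfrac{a}{4z}\right)^{2/m}$, an $(m/2)$\ts{th} root that is not single-valued on an annulus unless the multiplicities of the two branch points in $U$ conspire (essentially $\gcd(m,n_i)=m/2$ so that the inertia at each has order two). Either you should rule the configuration out as the paper does --- which requires an argument beyond ``$\tau(A)=A$ fails,'' since in your setup only $\tau^{m/2}(A)=A$ is claimed --- or you should verify the parameterization in the cases where it survives. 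A second, shared, soft spot is the assumption that every maximal annulus of $C$ surjects onto a full tube of the chosen $\mathbb{P}^1$-decomposition; when $\pi^{-1}(U)$ of a two-branch-point disc is connected with nontrivial topology, the semistable model of $C$ must be refined and the resulting maximal annuli lie over proper sub-annuli of $U$. Your normalization set-up makes this assumption explicit, which is good, but it still needs justification.
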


\begin{proof}

Suppose $\phi \colon A \to C$ is a maximal annulus such that $A(k) \neq \emptyset$. Then $A$ can be parametrized using Lemmas~\ref{discs} and~\ref{annuli}. We have the following cases.

\begin{enumerate}
\item $\tau(A) \cap A = \emptyset$.

In this case $\pi$ is an isomorphism from $A$ onto its image, and we conclude that $\pi(A)$ is an annulus containing no ramification points. Thus, by Lemma~\ref{annuli}, each component of $\pi^{-1}(\pi(A))$ can be parametrized via the map we described there. In particular, the lemma provides a parameterization of $A$ itself. We must further have $\gcd(m, \Theta_0) > 1$, since $\tau$ sends $A$ to a disjoint annulus. 

\item $\tau(A) \cap A \neq \emptyset$, and $\tau$ preserves the orientation of the chain corresponding to $A$ in the special fiber of $C.$


In this case we have $\tau(A) = A$. The orientation condition combined with Raynaud's classification (Theorem~\ref{raynaud}) implies that (up to an analytic change of coordinates) $\phi^* \tau$ is a rotation of order dividing $m$ on $A_{\alpha,k}$. Thus $\pi(A)$ is an annulus containing no branch points. We conclude that $A$ is parametrized via Lemma~\ref{annuli}. Furthermore, in this case $\gcd(m, \Theta_0) = 1$ since $\pi^{-1}(\pi(A)) = A$.

\item $\tau(A) \cap A \neq \emptyset$, and $\tau$ reverses the orientation of the chain corresponding to $A$.


We claim that this case cannot occur when $m > 2$. As above, $\tau(A) = A$, and Raynaud's classification tells us that $\phi^* \tau$ is an inversion composed with a rotation of order dividing $m$, so $\pi(A)$ is a disc containing two branch points. Then $\pi^{-1}(A)$ is thus parametrized via the proof of Lemma~\ref{discs}, i.e. $\pi^{-1}(A)$ is a disjoint union of $\frac{m}{2}$ annuli, each interchanged by $\tau$. When $m > 2$ (and $m$ is even) this contradicts the assumption that $\tau(A) = A$.
\end{enumerate}
\end{proof}

\section{Bounding Zeros of Differentials}

Our goal in this section is to obtain bounds on the number of zeros of differentials on the annuli which cover our superelliptic curve $C$. In general, the Weierstrass Preparation Theorem \cite[Theorem 2.4.3]{cherry}  says that an analytic function on an annulus $A$ can be written in the form $f = vu$, where $v$ is a Laurent polynomial with finitely many exponents and $u$ has no zeros on $A$. One of Stoll's key insights was that there is a basis for $H^0(C_k,\Omega^1_C)$ in which every basis element has the same $u$ in the Weierstrass decomposition. This uniform description allows us to induce cancellation in the $v$-component.

Differentials on a superelliptic curve with equation $y^m = f(x)$ are much more complicated than those on a hyperelliptic curve, especially when $f(x)$ is not assumed to be separable. (As pointed out by Stoll in email correspondence, we cannot assume separability because when we move the branch point at infinity to zero, say, it may become root of higher multiplicity.) However, Koo in \cite{koo} does describe a number of linearly independent differentials. A consequence of Koo's main theorem is that, for $0\leq i \leq \lfloor \frac{\deg(f)}{m} \rfloor - 2$, the differentials $$\omega^{(i)} = x^i \frac{dx}{y}$$ are holomorphic. (Note that we may have changed $\deg(f)$ by moving ramification away from infinity, but we have only increased it, so we may use $\deg(f)$ in the bound here.) There are, in fact, more available differentials, but their existence depends in a complicated combinatorial way on the factors that their leading polynomials in $x$ share with $f(x)$. At this stage it does not seem particularly useful to count them.

Based on our classification of annuli and discs on $C$, we thus have the following local description of differentials. The only case when $m>2$ is an annulus arising from Lemma~\ref{annuli}, so we have $$\phi^* \omega^{(i)} = z^{(i+1)m/d - \# \Theta_0 /d} \frac{1}{h\left(u^{d \# \Theta_0^{-1}}z^{m/d}\right)} \frac{dz}{z}.$$ We've again used the notation $d = \gcd(m,\# \Theta_0)$.






\begin{thm}
Let $V \neq 0$ be a subspace of codimension at least one of $H^0(C_k,\Omega^1_C)$. Then there exists a nonzero differential $\omega \in V$ such that $\phi^* \omega = v(z) u(z) dz/z$, where $v$ is a finite Laurent series with its highest and lowest exponents differing by at most $m(r+2)/d+1$ and $u$ is an analytic function that is nonzero on $A$.
\end{thm}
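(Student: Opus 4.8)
The plan is to exploit the fact, recorded in the discussion preceding the theorem, that the holomorphic differentials $\omega^{(i)} = x^i\,dx/y$ for $0 \le i \le \lfloor \deg(f)/m\rfloor - 2$ all pull back to the annulus with a \emph{common} analytic factor: explicitly $\phi^*\omega^{(i)} = z^{(i+1)m/d - \#\Theta_0/d}\,\bigl(h(u^{d\#\Theta_0^{-1}}z^{m/d})\bigr)^{-1}\,dz/z$, where $h$ is nonvanishing on the annulus (it is a product of the convergent $m$th-root series $f_\theta^{\pm}$, which are units). Thus every $\omega^{(i)}$ has the form $z^{e_i} u(z)\,dz/z$ with the \emph{same} $u(z) = h(u^{d\#\Theta_0^{-1}}z^{m/d})^{-1}$, and the exponents $e_i = (i+1)m/d - \#\Theta_0/d$ form an arithmetic progression with common difference $m/d$ as $i$ ranges over $0,\dots,\lfloor\deg(f)/m\rfloor - 2$. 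Since $\phi^*$ is linear, any differential in the span of the $\omega^{(i)}$ pulls back to $v(z)u(z)\,dz/z$ where $v$ is a Laurent polynomial supported on this arithmetic progression, i.e. $v(z) = \bigl(\sum_i c_i z^{(i+1)m/d}\bigr) z^{-\#\Theta_0/d}$.

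Next I would do the linear algebra. Let $N = \lfloor\deg(f)/m\rfloor - 1$ be the number of differentials $\omega^{(0)},\dots,\omega^{(N-1)}$; by the rank hypothesis $r \le \lfloor\deg(f)/m\rfloor - 4$ we have $N \ge r+3$. Intersecting the $N$-dimensional span $\langle \omega^{(0)},\dots,\omega^{(N-1)}\rangle$ with the given codimension-$\ge 1$ subspace $V$ still leaves a subspace of dimension at least $N - 1 \ge r+2$. Imposing vanishing of the first $r+1$ coefficients $c_0,\dots,c_r$ (equivalently, requiring the lowest exponent appearing in $v$ to be pushed up past the first $r+1$ slots of the progression) is $r+1$ further linear conditions; since $r+2 > r+1$, there is still a nonzero $\omega \in V$ in this span whose pulled-back $v$ is supported on a window of exponents $\{(r+2)m/d - \#\Theta_0/d,\, (r+3)m/d - \#\Theta_0/d,\,\dots\}$ up to the top exponent $Nm/d - \#\Theta_0/d$. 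Wait — that window is too wide; instead the right move is to impose vanishing of the top $N - 1 - (r+1)$ coefficients as well, or more cleanly: choose $\omega$ in the intersection killing all but $r+2$ consecutive coefficients $c_j, c_{j+1},\dots,c_{j+r+1}$, which is possible since killing the other $N - (r+2)$ coefficients together with the one condition cutting out $V$ is at most $N - r - 2 + 1 = N - r - 1 \le N - 1$ conditions on an $N$-dimensional space, leaving something nonzero. Then the highest and lowest exponents of $v$ differ by $(r+1)\cdot(m/d)$, which is at most $m(r+2)/d + 1$, as required.

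The main obstacle, and the step deserving the most care, is the bookkeeping of exactly how many linear conditions one can afford and still guarantee a nonzero solution inside $V$ — in particular getting the constant in the exponent spread to be $m(r+2)/d + 1$ rather than something slightly larger. One must be careful that $V$ may intersect the span of the $\omega^{(i)}$ in a subspace of codimension exactly one (not zero), which costs one dimension, and that the $\omega^{(i)}$ are genuinely linearly independent on the annulus (they are, since their pullbacks have distinct leading exponents $(i+1)m/d - \#\Theta_0/d$, these being distinct because $d \mid m$ and the $i$ are consecutive). A secondary point to verify is that $u(z) = h(u^{d\#\Theta_0^{-1}}z^{m/d})^{-1}$ is indeed analytic and nonvanishing on all of $A_{0,k}$: this follows because $h$ is a product of the series $f_\theta^{+}$ and $f_\theta^{-}$, each of which converges and is a unit on the relevant region by the convergence properties of the $m$th root recorded in Section~5, so $h$ itself is a nonvanishing analytic function and its reciprocal is too. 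Finally I would remark that the presence of the factor $d = \gcd(m,\#\Theta_0)$ in the denominator is exactly the new phenomenon, absent when $m = 2$, that the theorem is designed to expose: a larger $d$ compresses the exponent window and hence sharpens the zero count on that annulus.
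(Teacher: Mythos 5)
Your core idea is the same as the paper's: the pullbacks $\phi^*\omega^{(i)}$ share a common nonvanishing factor $u(z)=h\bigl(u^{d\#\Theta_0^{-1}}z^{m/d}\bigr)^{-1}$ and have exponents in an arithmetic progression of common difference $m/d$, so one finds a nonzero element of $V$ inside a span of these differentials and reads off the width of $v$ directly. The paper executes this more simply than you do: it takes $W=\langle\omega^{(0)},\dots,\omega^{(r+2)}\rangle$, of dimension exactly $r+3$, and observes that since (in the intended hypothesis, which is what the proof actually uses) $\dim V\ge g-r-2$, one has $\dim W+\dim V>g$, so $W\cap V\neq\{0\}$; \emph{every} nonzero element of $W$ then automatically has exponent spread at most $(r+2)m/d\le m(r+2)/d+1$, with no further conditions imposed.

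The genuine problem with your version is the dimension count in the middle. You bring in all $N=\lfloor\deg(f)/m\rfloor-1$ differentials and then impose $N-(r+2)$ extra vanishing conditions to force the support into a window of $r+2$ consecutive slots, budgeting only ``one condition cutting out $V$.'' That budget is only valid if $V$ has codimension exactly one. The theorem as printed says ``codimension at least one,'' which gives no upper bound on the codimension, and in the actual application $V$ is $W_{\text{chab}}$, whose codimension can be as large as $r+2$; with codimension $c$ your count becomes $c+(N-r-2)$ conditions on an $N$-dimensional space, which is $\ge N$ as soon as $c\ge r+2$, so you can no longer guarantee a nonzero solution. (The paper's statement/proof mismatch --- ``codimension at least one'' versus ``dimension at least $g-r-2$'' --- is a defect of the paper, but your argument is the one that breaks under the hypothesis actually needed.) The fix is exactly the paper's move: drop the extra vanishing conditions, restrict attention to the first $r+3$ differentials, and accept the slightly wider window $(r+2)m/d$, which still meets the stated bound. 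Your secondary observations --- linear independence of the $\omega^{(i)}$ via distinct exponents, and nonvanishing of $u$ because $h$ is a product of unit power series --- are correct and are implicit in the paper.
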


\begin{proof}
Let $W$ be the subspace of $H^0(C_k,\Omega^1_C)$ spanned by $\{\omega^{(i)} \mid 0 \leq i \leq r + 2\}$. (This subspace exists by the rank hypothesis $r \leq \lfloor \frac{\deg(f)}{m} \rfloor -4$.) The $\omega^{(i)}$ are linearly independent, and so $W$ has dimension $r + 3$. By assumption $V$ has dimension at least $g-r-2$, so $W \cap V \neq \{0\}$. Let $\omega$ be a nonzero element in this intersection.

As before, $A$ must arise as in Lemma~\ref{annuli}. Then, since $\omega \in W$, we have the description $$\phi^* \omega = \sum_{j = 0}^{r+2} z^{(j+1)m/d - \# \Theta_0 /d} \frac{1}{h\left(u^{d \# \Theta_0^{-1}}z^{m/d}\right)} \frac{dz}{z}.$$ Factoring out $u(z) \frac{dz}{z} \defeq \frac{1}{h\left(u^{d \# \Theta_0^{-1}}z^{m/d}\right)} \frac{dz}{z}$, the remaining sum $v(z)$ has exponents that range from $m/d - \# \Theta_0 /d$ to $m(r+3)/d + m/d - \# \Theta_0 (m-1)/d$. Thus the highest and lowest exponents differ by at most $m(r+2)/d + 1$, inclusive. Furthermore, $v$ has no zeros because $h(x)$ has no poles in the annulus.


\end{proof}

\section{The Final Count(down)}






\subsection{$p$-Adic Rolle's Theorem}

The strategy of effective Chabauty hinges on being able to control the zeros of $\int_P^z \omega$ based on the zeros of $\omega$ using a $p$-adic Rolle's theorem. The theory of Newton polygons (and in more recent applications, tropical geometry) provides the tools for this analysis.

If $p>e+1$, where again $e$ denotes the ramification index of $k$ over $\mathbb{Q}_p$, we define $$\mu \defeq 1 + \frac{e}{p-e-1}.$$

\begin{prop}[\cite{stoll2013uniform}, Prop. 7.7]

Suppose a $p$-adic differential $\omega$ has a Newton polygon of width $w$ in a $p$-adic annulus $A$, and that $p > 2g$. Then $$f_\omega(z) \defeq \int _P^z \omega$$ has at most $\mu w$ zeros on $A$.
\end{prop}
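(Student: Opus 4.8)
The plan is to reduce the whole statement to a combinatorial comparison of two Newton polygons, exploiting that $f_\omega$ is, up to a logarithmic term, the formal antiderivative of $\omega$. I would first record the zero-counting principle already implicit in the Weierstrass framework of the previous section: if a nonzero analytic function $h=\sum_n c_n T^n$ converges on $A$, then the number of its zeros on $A$ (with multiplicity) equals the horizontal width of the portion of its Newton polygon---the lower convex hull of the points $(n, v(c_n))$, with $v$ the valuation on $k$ normalized by $v(p)=1$---whose edge slopes correspond to radii lying strictly inside $A$. Applying this to $\omega$ itself reformulates the hypothesis: that width is exactly $w$. It therefore suffices to bound by $\mu w$ the analogous width attached to $f_\omega$.

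Second, I would write down the coefficientwise effect of integration. From the definition of the Berkovich--Coleman integral, $f_\omega = \sum_{n \neq 0}\frac{a_{n-1}}{n}T^{n} + a_0\Log(T)$ up to a constant, so away from the logarithm the coefficient at exponent $n$ is divided by $n$ and its valuation drops by $v_p(n)$, the ordinary $p$-adic valuation of the integer $n$. Hence the Newton polygon of $f_\omega$ is obtained from that of $\omega$ (shifted by one in the exponent) by lowering each vertex sitting over a multiple of $p$ by the integer $v_p(n)$, while leaving every vertex with $p\nmid n$ untouched. The residual term $a_0\Log(T)$ has coefficients $\tfrac{(-1)^{n+1}}{n}$ and is governed by the very same division-by-$n$ estimate, so I would fold it into the same analysis; in the Chabauty application it is in any case annihilated by the second linear condition cutting out $W_{\mathrm{chab}}$.

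The crux is to show that this downward perturbation multiplies the relevant width by at most $\mu$. Lowering interior vertices can only enlarge the convex hull, creating new edges whose slopes may now fall inside the annulus and thereby contribute extra zeros. To control them I would argue self-referentially over a window of total width $W$: the vertices that drop are the multiples of $p$, occurring with densities $p^{-1}, p^{-2}, \dots$, so the total vertical drop across the window is at most $\sum_{j\ge 1} W p^{-j} = W/(p-1)$, up to a lower-order logarithmic correction controlled by the hypothesis $p>2g$ (which bounds the range of exponents actually reached by the holomorphic $\omega^{(i)}$ together with the tail of $1/h$). Because the valuations on $k$ lie in $\tfrac{1}{e}\mathbb{Z}$, the shallowest edge the hull can acquire has slope of absolute value $1/e$, so each unit of vertical drop spawns at most $e$ units of horizontal width. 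Since the newly created width itself contains multiples of $p$ that feed back into the estimate, I obtain the self-bounding inequality $W \le w + \tfrac{e}{p-1}W$, which solves to $W \le \tfrac{p-1}{p-e-1}\,w = \mu w$.

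The main obstacle is exactly the rigor of this last, self-reinforcing step: I must track how the lowered vertices reshape the lower convex hull, confirm that only a controlled amount of newly created width has slopes landing inside the annulus's window rather than escaping past its boundary radii, and verify that the series closes precisely when $p>e+1$. This is the condition making $\mu$ positive, and it holds because the minimal positive valuation $1/e$ then exceeds the convergence threshold $1/(p-1)$ of the $p$-adic logarithm, which is what tames the division by multiples of $p$. Once the width of the Newton polygon of $f_\omega$ on $A$ is bounded by $\mu w$, the zero-counting principle of the first step converts this directly into the claimed bound on the number of zeros of $f_\omega$.
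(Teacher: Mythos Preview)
The paper itself gives no proof of this proposition; it is imported verbatim from \cite{stoll2013uniform}, Proposition~7.7, and is used as a black box. So there is no ``paper's own proof'' to compare against, and your write-up is really an independent attempt at Stoll's result.

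Your overall framework---compare the Newton polygon of $\omega$ with that of its formal antiderivative, tracking the drops $v_p(n)$ in the coefficient valuations---is exactly the right one, and it is the mechanism Stoll uses. The gap is in your crux step. The assertion ``because the valuations on $k$ lie in $\tfrac{1}{e}\mathbb{Z}$, the shallowest edge the hull can acquire has slope of absolute value $1/e$, so each unit of vertical drop spawns at most $e$ units of horizontal width'' is not correct as stated: two vertices at $(0,0)$ and $(N,1/e)$ give a slope $1/(Ne)$, arbitrarily small. Discreteness of the value group bounds the minimal nonzero \emph{vertical} increment, not the minimal nonzero \emph{slope}. Consequently your self-bounding inequality $W \le w + \tfrac{e}{p-1}W$ is not justified by the argument you give, and the feedback heuristic (``the newly created width itself contains multiples of $p$'') is circular without a separate anchor.

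What actually pins down the factor $\mu$ in Stoll's argument is a direct comparison at the two extreme vertices of the relevant portion of the polygon: one shows that if the rightmost (resp.\ leftmost) relevant vertex of the antiderivative sits at exponent $n$, then $v_p(n)$ is bounded in terms of the slope at the boundary of the annulus, and this slope is bounded below not by $1/e$ in general but by the specific geometry of the annulus endpoints, which in Stoll's setting are $k$-rational. Your invocation of $p>2g$ to ``bound the range of exponents reached by the holomorphic $\omega^{(i)}$'' is also misplaced here: the proposition is stated for an arbitrary $\omega$ on an arbitrary annulus, so any appeal to the particular basis $\omega^{(i)}$ belongs downstream, not in this lemma. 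If you want to make your sketch rigorous, isolate the two boundary vertices, bound $v_p(n)\le \log_p |n|$ there, and solve the resulting inequality for the endpoint displacement directly rather than through a self-referential width argument.
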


\subsection{Uniform Bounds}
\begin{rem}
Throughout this section we will assume that $K = \mathbb{Q}$ and $k = \mathbb{Q}_p$ for a prime $p$ that we will pick wisely. It is possible to obtain a bound $R(\delta,m,g,r)$ when $K$ is an arbitrary number field of degree $\delta$ in the same way Stoll does, but we emphasize here the explicit nature of the bounds over $\mathbb{Q}$.
\end{rem}

We will need to have a bound for the number of rational points that reduce to smooth points of the special fiber of $C$. This bound has been established in two ways in the literature: Via the alternative rank functions of Katz and Zureick-Brown \cite{katz2015uniform} and through the theory of metrized complexes defined by Amini and Baker \cite{ab}. We restate their theorem here.

For the purposes of this section, it is helpful to denote by $C_D$ the portion of $C(\mathbb{Q}_p)$ covered by discs and by $C_A$ the portion covered by annuli. 

\begin{prop}[\cite{stoll2013uniform}, Lemma 7.1, using \cite{KZB}, Thm. 4.4]

Let $V \neq 0$ be a linear subspace of codimension $r$ of the space of regular differentials on $C$ and let $N_D$ denote the number of discs whose union is $C_D(k)$. Suppose further that $p>e+1$. Then the integrals $f_\omega$ for $\omega \in V$ have at most $$N_D + 2\mu r \leq (5q+2)(g-1) - 3q(g-1) + 2 \mu r.$$ common zeros in $C_D(k)$.

\end{prop}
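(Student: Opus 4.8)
The plan is to bound the set of common zeros
$Z := \{\, z \in C_D(k) \mid f_\omega(z) = 0 \text{ for every } \omega \in V \,\}$
one residue disc at a time and then combine the local estimates with a Clifford-type inequality. Write $C_D(k) = D_1 \sqcup \cdots \sqcup D_{N_D}$ with $\phi_i \colon D_{0,k} \to D_i$ the parametrizing coordinate, $\bar x_i$ the reduction of $D_i$ in the special fiber, and $P \in D_i(k)$ the rational point it contains (a disc with no rational point contributes nothing to $Z$). Since $Z = \bigsqcup_i (Z \cap D_i)$, it suffices to estimate each $\#(Z \cap D_i)$ and sum.

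First I would handle a single disc. For each $i$ set $d_i := \min\{\, \mathrm{ord}_{\bar x_i}(\bar\omega) \mid \omega \in V,\ \omega \neq 0 \,\}$, where $\bar\omega$ denotes the reduction of $\omega$ scaled to be primitive on the special fiber, and fix $\omega_i \in V$ attaining this minimum. Because Stoll's model from \cite[Section~5]{stoll2013uniform} is smooth along $C_D$, the divisor of zeros of $\omega_i$ extends to a horizontal divisor and specializes without jumping, so $\omega_i$ has exactly $d_i$ zeros on $D_i$; equivalently $\phi_i^* \omega_i$ has Newton polygon of width $d_i$ on $D_{0,k}$. The $p$-adic Rolle estimate (Proposition~7.7 of \cite{stoll2013uniform}), which is available since $p > e+1$ makes $\mu$ meaningful, then yields that $f_{\omega_i}(z) = \int_P^z \omega_i$ has at most $1 + \mu d_i$ zeros on $D_i$: the unavoidable zero at $P$, together with at most $\mu d_i$ more coming from the width. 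As $Z \cap D_i$ lies in the zero set of $f_{\omega_i}$, this gives $\#(Z \cap D_i) \le 1 + \mu d_i$.

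Summing yields $\#Z \le N_D + \mu \sum_i d_i$, so the work reduces to showing $\sum_i d_i \le 2r$. By the definition of $d_i$, every $\omega \in V$ reduces to a differential vanishing to order at least $d_i$ at $\bar x_i$; hence the $(g-r)$-dimensional reduction $\bar V$ lies inside the global sections of the dualizing sheaf of the special fiber twisted down by $D := \sum_i d_i\, \bar x_i$. That twisted space therefore has codimension at most $r$ inside the full $g$-dimensional space of sections, and the Clifford-type bound for geometric rank functions (\cite{KZB}, Theorem~4.4) forces $\deg D = \sum_i d_i \le 2r$. Combining, $\#Z \le N_D + 2\mu r$, and the displayed numerical bound follows from the estimate $N_D \le (5q+2)(g-1) - 3q(g-1)$ on the number of discs, which is part of the data of the model in \cite[Section~5]{stoll2013uniform}.

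I expect the global step $\sum_i d_i \le 2r$ to be the main obstacle. On a smooth curve it is immediate from Riemann--Roch together with Clifford's theorem, but here the special fiber is in general a reducible nodal curve, on which the naive inequality can fail; this is exactly why the geometric-rank-function refinement of Clifford's theorem in \cite{KZB} is needed, and one must additionally check that the $d_i$ defined above coincide with the base-locus multiplicities that feed into that refinement. The per-disc estimate, by contrast, is a routine Newton polygon argument once one records that the width of $\phi_i^* \omega_i$ equals $\mathrm{ord}_{\bar x_i}(\bar\omega_i)$; its only real input is that divisors of zeros of differentials specialize correctly along $C_D$, which is built into the model's construction.
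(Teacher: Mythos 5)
Your reconstruction is correct and is essentially the argument behind the cited results: the paper itself gives no proof here (it only restates Lemma~7.1 of \cite{stoll2013uniform} together with Theorem~4.4 of \cite{KZB}), and your disc-by-disc Newton polygon bound of $1+\mu d_i$ followed by the Clifford-type estimate $\sum_i d_i \le 2r$ on the special fiber is exactly how those sources prove it. You also correctly identify the one genuinely delicate point, namely that the special fiber need not be irreducible or reduced, which is precisely why the rank-function refinement of Clifford's theorem from \cite{KZB} is invoked rather than the classical statement.
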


To calculate the number of rational points lying in $C_A(\mathbb{Q}_p)$, we conclude the discussions of the previous sections. 

\begin{prop}

Suppose $p > e+1$. Then the number of common zeros in $C_A(\mathbb{Q}_p)$ of all $f_\omega$ for $\omega \in V_{\text{chab}}$ is bounded by $$\left( \frac{4g-4}{m} +1 \right)  \mu m(r+3) .$$

\end{prop}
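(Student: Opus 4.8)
The plan is to count zeros annulus by annulus, grouping the annuli of $C$ in two stages: first the $d$ annuli lying over a fixed annulus of $\mathbb{P}^1$, then the annuli of $\mathbb{P}^1$ themselves. Since $m>2$, the classification of annuli (Lemmas~\ref{discs} and~\ref{annuli} together with Raynaud's Theorem~\ref{raynaud}) tells us that every maximal annulus of $C$ meeting $C_A(\mathbb{Q}_p)$ occurs as one of the $d=\gcd(\#\Theta_0,m)$ components $A^{(1)},\dots,A^{(d)}$ of $\pi^{-1}(A_0)$ for some maximal annulus $A_0\subseteq\mathbb{P}^1_k$ with $A_0\cap\Theta=\emptyset$; if the $d$\ts{th}-power and $\gamma$ conditions of Lemma~\ref{annuli} fail then $\pi^{-1}(A_0)$ carries no $k$-points and contributes nothing, so we may assume they hold.

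First I would bound the zeros on a single component $A^{(j)}$, with parametrization $\phi$. By Lemma~\ref{vchab} and Theorem~\ref{stoll}, on this tube there is a subspace $W_{\text{chab}}\subseteq V_{\text{chab}}$ of dimension at least $g-r-2$ on which the Abelian and Berkovich--Coleman integrals agree; by the rank hypothesis $r\leq\lfloor\deg(f)/m\rfloor-4$, the preceding theorem (the Weierstrass factorization $\phi^*\omega = v(z)u(z)\,dz/z$ with $v$ of controlled width) applies to $W_{\text{chab}}$ and produces a nonzero $\omega\in W_{\text{chab}}$ with $u$ a unit on $A^{(j)}$ and the exponents of $v$ spanning an interval of length at most $m(r+2)/d+1$. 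Multiplication by the unit $u$ does not change the Newton polygon of $\phi^*\omega$ in the range of slopes corresponding to $A^{(j)}$, so that Newton polygon has width at most $m(r+2)/d+1$; by the $p$-adic Rolle's theorem stated above, $f_\omega$ has at most $\mu\big(m(r+2)/d+1\big)$ zeros on $A^{(j)}$. Since any common zero of all $f_\omega$ with $\omega\in V_{\text{chab}}$ is, a fortiori, a zero of this particular $f_\omega$, the same bound controls the common zeros in $A^{(j)}$.

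Next I would sum over the $d$ sheets above $A_0$. A key point is that one is free to choose a different admissible differential on each $A^{(j)}$, so the contributions add to give, using $d\leq m$,
$$\sum_{j=1}^{d}\mu\left(\frac{m(r+2)}{d}+1\right) \;=\; \mu\big(m(r+2)+d\big)\;\leq\;\mu\,m(r+3)$$
common zeros in $\pi^{-1}(A_0)$. It then remains to bound the number $N$ of maximal annuli $A_0\subseteq\mathbb{P}^1_k$ disjoint from $\Theta$, because $\#\{\text{common zeros in }C_A(\mathbb{Q}_p)\}\leq N\,\mu\,m(r+3)$, and the proposition follows once $N\leq\frac{4g-4}{m}+1$.

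I expect this last combinatorial bound to be the main obstacle. The annuli $A_0$ are the edges of the dual tree of the stable model of the marked curve $(\mathbb{P}^1_k,\Theta)$; since each vertex of a stable genus-zero marked curve carries at least three special points, a tree with $N$ edges supports at least $N+3$ of the markings, whence $N\leq s-3$, where $s$ is the number of distinct roots of $f$. After the coordinate change that removes any root at infinity — which, as in the discussion preceding Lemma~\ref{discs}, forces $m\mid\deg(f)$ and hence leaves infinity unramified — Riemann--Hurwitz for $\pi\colon C\to\mathbb{P}^1$ reads $2g-2=-2m+\sum_j\big(m-\gcd(m,n_j)\big)$ with all $n_j<m$. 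Writing $q$ for the smallest prime dividing $m$ we have $\gcd(m,n_j)\leq m/q\leq m/2$, so each of the $s$ branch points contributes at least $m/2$ to $2g-2$; therefore $s\leq(2g-2+2m)/(m/2)=\frac{4g-4}{m}+4$ and $N\leq s-3\leq\frac{4g-4}{m}+1$, as needed. The subtle points here are getting the base change at infinity right before invoking Riemann--Hurwitz, checking that the semistable model of $C$ we use maps to the stable marked model of $(\mathbb{P}^1,\Theta)$ without creating extra annuli, and noting that the case $m$ even with every $\gcd(m,n_j)=m/2$ makes the constant $\frac{4g-4}{m}+1$ essentially sharp. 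The per-annulus part, by contrast, is a fairly mechanical combination of the Weierstrass factorization with the Newton-polygon estimate.
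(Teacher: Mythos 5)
Your proposal is correct and follows essentially the same route as the paper: a per-orbit bound of $\mu\bigl(m(r+2)+d\bigr)\leq\mu m(r+3)$ obtained from the Weierstrass factorization and the $p$-adic Rolle estimate, multiplied by the count of at most $s-3\leq\frac{4g-4}{m}+1$ annulus orbits coming from the tree of branch points and the Riemann--Hurwitz computation. The only cosmetic difference is that you sum the Rolle bound component-by-component over the $d$ sheets rather than bounding the shared zeros on the whole orbit at once, and you phrase Koo's Riemann--Hurwitz formula with the infinity term removed rather than bounded; both give the identical estimate.
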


\begin{proof}

Our residue tube analysis shows that, in fact, each orbit of annuli that arise in the case of a non-inverting action of $\tau$ contains at most $m(r+2)+d \leq m(r+3)$ shared zeros. We therefore take $m(r+3)$ as a uniform bound for the number common zeros of the differentials in $V$ on any orbit of annuli. Applying Stoll's Newton polygon calculation for the optimal differential in the residue orbit, this leaves at most $\mu m(r+3)$ common zeros of the integrals $f_\omega$.

We can count the number of orbits of residue annuli. Each orbit corresponds to an edge in the image of a minimal skeleton of $C$ via the analytification of the map $\pi$. This image is obtained by starting with the convex hull of the ramification points in $\mathbb{P}^1$ (a tree with at most $s$ leaves) and removing the leaves. This leaves a tree with at most $s - 2$ nodes, hence at most $s-3$ edges.

Now a computation with Riemann-Hurwitz, worked out by Koo by his paper, shows that $$2g-2 = m(s-1) - \gcd(m, \deg(f)) - \sum_{i=1}^s \gcd(m,n_i).$$ Note that $\gcd(m,\deg(f)) \leq m$ and $\gcd(m,n_j) \leq m/2$, and so $$s \leq \frac{4g-4}{m} + 4.$$ (Again, $\deg(f)$ may have changed through changes of variables, but it doesn't affect this bound.) Thus there are at most $(4g-4)/m + 1$ orbits of annuli. Multiplying this by the number of zeros on each annulus orbit concludes the proof.
\end{proof}
We are now ready to complete the proof of the main theorem modulo the choice of the prime $p$.
\begin{proof}[Proof of Theorem~\ref{main}]
Combining the bounds on $C_D$ and $C_A$, the total number of common zeros of all $f_\omega$ for $\omega \in V_{\text{chab}}$ is bounded by $$\left( (4g-4)/m +1 \right)  \mu m(r+3) + (5p+2)(g-1) - 3p(g-1) + 2 \mu r.$$
\end{proof}

Now we optimize the choice of a prime that satisfies $p>e+1$ and $(p,m) = 1$. The $(p-1)$\ts{th} roots of unity are the only roots of unity in $\mathbb{Q}_p$ for $p$ odd, and so $\zeta_m \in \mathbb{Q}_p$ if and only if $p = 1 \pmod{m}$. For such a prime we have $e=f=1$.

The problem of finding the smallest prime in an arithmetic progression is answered by Linnik's Theorem. The theorem says that there exists an $L$ and $m_0$ such that for all $m>m_0$, the smallest prime congruent to one modulo $m$ is less than a constant times $m^L$. Recent work has shown that $L$ can be taken a little under 5, but at the cost of astronomical bounds for $m_0$. Under the GRH, the smallest prime congruent to one modulo $m$ is less than $m (\log m)^2$. We will content ourselves with an easily digested exponential bound, and trust the reader to search for the smallest prime in an arithmetic progression in any one specific case or use a polynomial bound in general if they want to.

\begin{thm}[\cite{tv}]
The smallest prime congruent to 1 mod $m$ is at most $2^{\phi(m)}-1$.
\end{thm}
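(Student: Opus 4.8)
The plan is to produce a prime congruent to $1$ modulo $m$ as a divisor of a number whose size we can control, namely $\Phi_m(2)$, the $m$th cyclotomic polynomial evaluated at $2$. Two elementary facts drive the argument. First, if $\ell$ is a primitive prime divisor of $2^m-1$ (one dividing $2^m-1$ but not $2^j-1$ for any $0<j<m$), then $2$ has multiplicative order exactly $m$ in $\mathbb{F}_\ell^\times$, so $m \mid \ell-1$, that is $\ell \equiv 1 \pmod m$; moreover such an $\ell$ divides $\Phi_m(2)$, since in $2^m-1 = \prod_{d\mid m}\Phi_d(2)$ the primitive part is concentrated in the top factor $\Phi_m(2)$. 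Second, by Zsigmondy's theorem $2^m-1$ has a primitive prime divisor for every $m \notin \{1,6\}$.

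I would combine these as follows. For $m \neq 6$ (with $m>1$), choose a primitive prime divisor $\ell$ of $2^m-1$: then $\ell \equiv 1 \pmod m$ and $\ell \mid \Phi_m(2)$, so the least prime in the progression $1 \bmod m$ is at most $\ell \le \Phi_m(2)$. The lone exceptional modulus $m=6$ I would settle by inspection, the least prime there being $7$. (If one prefers to avoid Zsigmondy, the same reduction comes from the classical description of the prime divisors of $\Phi_m(2)$: each is either $\equiv 1 \pmod m$, or equals the largest prime factor of $m$, which it then divides to the first power only; so once $\Phi_m(2)$ exceeds that prime it must have a factor in the desired progression, and $m=6$ is again the sole place where this can fail.)

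What remains is the upper bound on $\Phi_m(2)$. Here I would use that $\Phi_m$ is monic of degree $\phi(m)$ with constant term $1$, together with $2^m-1 = \prod_{d\mid m}\Phi_d(2)$ and the crude bounds $\Phi_d(2)\ge 1$, to isolate $\Phi_m(2)$ as a factor of $2^m-1$ and compare it against $2^{\phi(m)}$. I expect this last, entirely elementary, size estimate --- rather than any divisibility input --- to be the delicate point: one has to keep careful track of which cyclotomic factors of $2^m-1$ are being divided out, and the comparison is tightest precisely when $\phi(m)$ is small relative to $m$, where a short explicit list of small $m$ would be dealt with individually.
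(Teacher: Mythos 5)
Your overall strategy --- locate a prime $\equiv 1 \pmod m$ among the divisors of the cyclotomic value $\Phi_m(2)$ and then bound $\Phi_m(2)$ from above --- is exactly the route taken in the cited reference of Thangadurai and Vatwani (the paper itself supplies no proof, only the citation). The divisibility half of your sketch is sound: a primitive prime divisor of $2^m-1$ has order exactly $m$, hence is $\equiv 1 \pmod m$ and divides $\Phi_m(2)$; Zsigmondy covers every $m \notin \{1,6\}$; and your alternative description of the prime divisors of $\Phi_m(2)$ (each is $\equiv 1\pmod m$ or is the largest prime factor of $m$, dividing to the first power only) is the one the reference actually uses.

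The gap is in the step you yourself flag as delicate, and it is not merely delicate --- the inequality you are aiming for is false. You cannot prove $\Phi_m(2) \le 2^{\phi(m)}-1$, nor even $\Phi_m(2) \le 2^{\phi(m)}$, because for every prime $m$ one has $\Phi_m(2) = 2^m-1 = 2^{\phi(m)+1}-1$. You also have the direction of difficulty reversed: the estimate is tightest (and fails) when $\phi(m)$ is \emph{large} relative to $m$, i.e.\ when $m$ is prime, not when it is small. Indeed the theorem as transcribed in the paper is itself false: for $m=3$ the least prime $\equiv 1 \pmod 3$ is $7 > 2^{\phi(3)}-1 = 3$, and $m=4,6,8$ give $5>3$, $7>3$, $17>15$. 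The bound actually proved in the reference is $2^{\phi(m)+1}-1$, and with that exponent your outline closes: the M\"obius factorization $\Phi_m(2)=\prod_{d\mid m}(2^{m/d}-1)^{\mu(d)}$ gives $\log_2\Phi_m(2)=\phi(m)+\sum_{d\mid m}\mu(d)\log_2(1-2^{-m/d})$, and a short estimate of the error sum shows $\Phi_m(2)<2^{\phi(m)+1}$, so the prime you produced is at most $2^{\phi(m)+1}-1$. The correct fix is therefore to repair the exponent in the statement (which propagates harmlessly into the paper's final bound) rather than to attempt the size comparison as you have set it up.
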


Putting all of this together, we conclude that for each $m$, $\# C(\mathbb{Q})$ is bounded by a bilinear polynomial in $r$ and $g$. Furthermore, the dependence on $m$ is at worst polynomial in nature, and can be bounded easily as a function of $2^{\phi(m)}-1$. We also note that one could pick a small prime and analyze its ramification in the field $\mathbb{Q}_p(\zeta_m)$; such a bound would still introduce dependence on $m$.

This concludes the proof of the main theorem.

\begin{rem}

In certain situations, one can use a tower of superelliptic curves to glean arithmetic information about the individual curves. Given the curve $y^m = f(x)$ as above, and the curve $C' \colon y^s = f(x)$ for any divisor $s$ of $m$, we always have a cover $$\rho\colon C \to C'.$$ This cover is given by the map $(x,y) \mapsto (x, y^{m/s})$.

Then a simple argument shows the following.
Suppose that $\# C'(K) \leq B$. Then $\# C(K) \leq R(K) B$, where $R(K)$ denotes the number of $m/s$-th roots of unity in $K$.


We would like to then replace $m$ by its smallest prime divisor and analyze the resulting curve $C'$, but it is not always true that $C'$ satisfies the rank hypothesis of Chabauty's method. It is true, however, that one can relate the genus of the two curves using Riemann-Hurwitz, so if Chabauty's method does work on $C'$ then we may obtain a better bound on the rational points of $C'$.
\end{rem}

\begin{rem}
One might wonder whether the bound in the main theorem can be taken completely independent of $m$. Based on our methods, even the most fanciful conjectures on the least prime in an arithmetic progression still inject some dependence on $m$ into our final bounds. In any case, under RH the dependence is relatively small.
\end{rem}

The main theorem thus provides infinitely many classes of curves $C$ (as we vary $m$ in the main theorem) for which the number of rational points on $C$ is be bounded linearly in $r$ and $g$, and raises the question of whether such a bound might hold for all curves.

\bibliographystyle{plain}
\bibliography{Superelliptic_Paper.bib}


\end{document}